\documentclass[12pt,amscd]{amsart}
\footskip =0.7cm
\textwidth=15.3cm
\textheight=21cm
\oddsidemargin=0.5cm
\evensidemargin=0.5cm
\pagestyle{plain}

\usepackage[all]{xy}
\usepackage{graphicx}
\usepackage{amsmath,amsxtra,amssymb,latexsym, amscd,amsthm}
\usepackage[pagewise]{lineno}
\usepackage{indentfirst}
\usepackage[mathscr]{eucal}
\usepackage[pagebackref=true]{hyperref}


\newtheorem{thm}{Theorem}[section]

\newtheorem{lem}[thm]{Lemma}
\newtheorem{prop}[thm]{Proposition}
\theoremstyle{definition}
\newtheorem{defn}[thm]{Definition}
\newtheorem{exm}[thm]{Example}

\newtheorem{rem}[thm]{Remark}
\newtheorem{conj}[thm]{Conjecture}
\numberwithin{equation}{section}

\DeclareMathOperator{\NN}{\mathbb {N}}

\DeclareMathOperator{\lk}{lk}

\DeclareMathOperator{\bou}{bou}
\DeclareMathOperator{\dstab}{dstab}
\DeclareMathOperator{\bc}{bc}

\DeclareMathOperator{\depth}{depth}
\DeclareMathOperator{\pd}{pd}
\DeclareMathOperator{\supp}{supp}
\DeclareMathOperator{\sdstab}{sdstab}

\DeclareMathOperator{\charr}{char}

\def\B {\mathcal B}
\def\d {\mathbf d}

\def\a {\mathbf a}
\def\b {\mathbf b}

\def\m {\mathfrak m}

\def\p {\mathfrak p}

\def\k {\mathrm{k}}
\def\h {\widetilde{H}}
\def\c {\mathbf{c}}


\begin{document}

\title[Symbolic depth function of cycles] {Stable value of depth of symbolic 
 powers of edge ideals of graphs}

\author{Nguyen Cong Minh}
\address{Faculty of Mathematics and Informatics, Hanoi University of Science and Technology, 1 Dai Co Viet, Hanoi, Vietnam}
\email{minh.nguyencong@hust.edu.vn}

\author{Tran Nam Trung}
\address{Institute of Mathematics, VAST, 18 Hoang Quoc Viet, Hanoi, Vietnam}
\email{tntrung@math.ac.vn}

\author{Thanh Vu}
\address{Institute of Mathematics, VAST, 18 Hoang Quoc Viet, Hanoi, Vietnam}
\email{vuqthanh@gmail.com}

\subjclass[2020]{05E40, 13D02, 13F55}
\keywords{depth of symbolic powers; cycles; stable value of depth}

\date{}

\dedicatory{Dedicated to Professor Ngo Viet Trung on the occasion of his 70th birthday}
\commby{}
\maketitle
\begin{abstract}
    Let $G$ be a simple graph on $n$ vertices. We introduce the notion of bipartite connectivity of $G$, denoted by $\operatorname{bc}(G)$ and prove that 
    $$\lim_{s \to \infty} \operatorname{depth} (S/I(G)^{(s)}) \le \operatorname{bc}(G),$$
    where $I(G)$ denotes the edge ideal of $G$ and $S = \mathrm{k}[x_1, \ldots, x_n]$ is a standard graded polynomial ring over a field $\mathrm{k}$. We further compute the depth of symbolic powers of edge ideals of several classes of graphs, including odd cycles and whisker graphs of complete graphs to illustrate the cases where the above inequality becomes equality.
\end{abstract}

\maketitle

\section{Introduction}
\label{sect_intro}

Let $I$ be a homogeneous ideal in a standard graded polynomial ring $S = \k[x_1,\ldots,x_n]$ over a field $\k$. While the depth function of powers of $I$ is convergent by the result of Brodmann \cite{Br}, the depth function of symbolic powers of $I$ is more exotic. Nguyen and N. V. Trung \cite{NT} proved that for every positive eventually periodic function $f: \NN \to \NN$ there exists an ideal $I$ such that $\depth S/I^{(s)} = f(s)$ for all $s \ge 1$, where $I^{(s)}$ denotes the $s$-th symbolic power of $I$. On the other hand, when $I$ is a squarefree monomial ideal, by the result of Hoa, Kimura, Terai, and T. N. Trung \cite{HKTT} and Varbaro \cite{Va}, 
$$\lim_{s\to \infty} \depth S/I^{(s)}  = \min \left \{ \depth S/I^{(s)} \mid s\ge 1 \right \} = n - \ell_s(I),$$
where $\ell_s(I)$ is the symbolic analytic spread of $I$. Nonetheless, given a squarefree monomial ideal $I$, computing the stable value of depth of symbolic powers of $I$ is a difficult problem even in the case of edge ideals of graphs.

Let us now recall the notion of the edge ideals of graphs. Let $G$ be a simple graph with the vertex set $V(G) = \{1, \ldots, n\}$ and edge set $E(G)$. The edge ideal of $G$, denoted by $I(G)$, is the squarefree monomial ideal generated by $x_ix_j$ where $\{i,j\}$ is an edge of $G$. In \cite{T}, the second author showed that $\lim_{s\to \infty} \depth S/I(G)^s$ equals the number of bipartite connected components of $G$, and that $\depth S/I(G)^s$ stabilizes when it reaches the limit depth. By the results of \cite{NV2, HNTT}, we may assume that $G$ is a connected graph when considering the depth of (symbolic) powers of the edge ideal of $G$. In this case, the result of \cite{T} can be written as 
\begin{equation}\label{eq_limit_depth}
    \lim_{s\to \infty} \depth S/I(G)^s = \begin{cases} 1 & \text{ if } G \text{ is bipartite}\\
0 & \text{ otherwise},\end{cases}
\end{equation}
and the stabilization index of depth of powers of $I(G)$, denoted by $\dstab(I(G))$, is the smallest exponent $s$ such that $\depth S/I(G)^s$ equals the limit depth of powers. Since we expect that the depth functions of symbolic powers of edge ideals are non-increasing, this property should hold for symbolic powers of $I(G)$ as well. In \cite{HLT}, Hien, Lam, and N. V. Trung characterized graphs for which $\lim_{s \to \infty} \depth S/I(G)^{(s)} = 1$ and proved that the stabilization index of depth of symbolic powers in this case is also the smallest exponent $s$ such that $\depth S/I(G)^{(s)} = 1$. For a general non-bipartite graph $G$, we do not know the value $\lim_{s\to \infty} \depth S/I(G)^{(s)}$. 

In this paper, we introduce the notion of bipartite connectivity of $G$ and show that this is tightly connected to the stable value of depth of symbolic powers of $I(G)$. Let $\B(G)$ denote the set of maximal induced bipartite subgraphs $H$ of $G$, i.e., for any $v\in V(G) \setminus V(H)$, the induced subgraph of $G$ on $V(H) \cup \{v\}$ is not bipartite. Note that $H$ might contain isolated vertices. Since $H$ is maximal, it contains at least one edge. Then we define $\bc(G) = \min \{c(H) \mid H \in \B(G)\}$ and call it the bipartite connectivity number of $G$, where $c(H)$ is the number of connected components of $H$.  With this notation, the result of Hien, Lam, and N. V. Trung can be stated as $\lim_{s\to \infty} \depth S/I(G)^{(s)} = 1$ if and only if $\bc(G) = 1$, i.e., there exists an induced connected bipartite subgraph $H$ of $G$ such that $H$ dominates $G$. In this paper, we generalize this result and prove:

\begin{thm}\label{thm_lim_depth_symbolic_power}
    Let $G$ be a simple graph. Then 
    $$\lim_{s\to \infty} \depth S/I(G)^{(s)} \le \bc(G).$$
\end{thm}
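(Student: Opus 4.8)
The plan is to produce, for a suitable exponent $s_0$, a nonvanishing module $\h^{c}_{\m}(S/I(G)^{(s_0)})$, where $c=\bc(G)$. Since $\depth S/I(G)^{(s)}=\min\{\,i:\h^{i}_{\m}(S/I(G)^{(s)})\ne 0\,\}$ and, by the theorem of Hoa, Kimura, Terai, and Trung \cite{HKTT} and of Varbaro \cite{Va} recalled above, $\lim_{s\to\infty}\depth S/I(G)^{(s)}=\min_{s\ge 1}\depth S/I(G)^{(s)}$, such a module yields the bound at once; equivalently, one may work with the symbolic analytic spread and prove $\ell_s\ge n-c$. Fix $H\in\B(G)$ with $c(H)=c$, put $W=V(H)$ and $U=V(G)\setminus W$, and let $H_1,\dots,H_c$ be the connected components of $H$, with bipartitions $V(H_i)=A_i\sqcup B_i$.

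The combinatorial backbone is the identity
\[
 I(G)^{(s)}\cap \k[x_w:w\in W]=I(G[W])^{(s)}=I(H)^{(s)}\qquad(s\ge 1),
\]
proved by comparing minimal vertex covers: a minimal vertex cover of $G$ meets $W$ in a vertex cover of $H$, while for a minimal vertex cover $C'$ of $H$ the set $C'\cup U$ covers $G$ and therefore contains a minimal vertex cover $C$ of $G$ with $C\cap W\subseteq C'$. In particular the minimal monomial generators of $I(G)^{(s)}$ supported on $W$ are precisely those of $I(H)^{(s)}$. Because $H$ is bipartite, $I(H)$ is normally torsion-free, so $I(H)^{(s)}=I(H)^{s}$; applying the second author's result \cite{T} on $\lim\depth$ of ordinary powers to $H$, whose bipartite connected components are $H_1,\dots,H_c$, gives an exponent $s_0$ with $\depth\bigl(\k[x_w:w\in W]/I(H)^{s_0}\bigr)=c$, hence (via Takayama's formula) a vector $\b\in\NN^{W}$ with $\h_{c-1}\bigl(\D_{\b}(I(H)^{(s_0)});\k\bigr)\ne 0$, where $\D_{\b}(\cdot)$ denotes the associated degree complex. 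One is free to replace $s_0$ by a multiple and rescale $\b$.

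Next, extend $\b$ to a vector $\a\in\NN^{n}$ by prescribing values on $U$, and examine $\D_{\a}(I(G)^{(s_0)})$ through Takayama's criterion: $F$ is a face iff $F$ is independent in $G$ and for every monomial $x^{\mathbf g}\in I(G)^{(s_0)}$ there is $i\in\supp(\mathbf g)\setminus F$ with $g_i>a_i$. By the identity above, if $\a|_{W}=\b$ then the full subcomplex of $\D_{\a}(I(G)^{(s_0)})$ on $W$ is exactly $\D_{\b}(I(H)^{(s_0)})$, and the only remaining question is whether the extra faces meeting $U$ change the reduced homology in degree $c-1$. Here is where maximality of $H$ enters: for $v\in U$ the graph $G[W\cup\{v\}]$ is not bipartite, and a short argument on $2$-colourings (using that the colourings of different components may be flipped independently) shows that $v$ has neighbours in both $A_i$ and $B_i$ for some $i$; joining two such neighbours by a path inside the connected bipartite graph $H_i$ yields an odd cycle $C_v\subseteq G$ through $v$, so that a suitable power of $\prod_{u\in V(C_v)}x_u$ lies in $I(C_v)^{(s_0)}\subseteq I(G)^{(s_0)}$ and may be fed into Takayama's criterion (which tests against all monomials of the ideal, not merely minimal generators). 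The goal is to choose $\a$ on $U$ so that these cycle monomials push every $v\in U$ out of $\D_{\a}(I(G)^{(s_0)})$, or at least leave it with an acyclic link so that it can be deleted without disturbing $\h_{\ast}$; then $\h_{c-1}\bigl(\D_{\a}(I(G)^{(s_0)});\k\bigr)=\h_{c-1}\bigl(\D_{\b}(I(H)^{(s_0)});\k\bigr)\ne 0$, whence $\h^{c}_{\m}(S/I(G)^{(s_0)})\ne 0$ and $\depth S/I(G)^{(s_0)}\le c$.

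The heart of the matter --- and the step I expect to be hardest --- is exactly this choice of $\a$ on $U$: it must simultaneously keep the full subcomplex on $W$ equal to $\D_{\b}(I(H)^{(s_0)})$ and make every outside vertex removable. Choosing $\a$ too large on $U$ lets odd cycles \emph{inside} $G[U]$ --- which can certainly exist, e.g. $G[U]$ may be a clique --- contribute a monomial of $I(G)^{(s_0)}$ that annihilates the whole degree complex; choosing $\a$ too small on $U$ forces the homology-carrying vector $\b$ to have large entries on the vertices of $W$ met by the cycles $C_v$, which a regularity bound on the degrees supporting $\h^{c}_{\m}\bigl(\k[x_w:w\in W]/I(H)^{s_0}\bigr)$ prevents in general. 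The resolution should be a $v$-dependent assignment on $U$ tied to the individual cycles $C_v$, together with passing to a sufficiently divisible multiple of $s_0$ (permissible since the depth has already stabilized). Everything else is bookkeeping with Takayama's formula and with minimal vertex covers.
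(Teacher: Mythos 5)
Your proposal is a plan rather than a proof: the step you yourself flag as ``the heart of the matter'' --- choosing the extension $\a$ of $\b$ to $U=V(G)\setminus V(H)$ so that every outside vertex becomes removable from the degree complex without destroying the homology class carried by $\D_{\b}(I(H)^{(s_0)})$ --- is never carried out, and it is not a routine bookkeeping step. Worse, the intermediate claim that ``if $\a|_W=\b$ then the full subcomplex of $\D_{\a}(I(G)^{(s_0)})$ on $W$ is exactly $\D_{\b}(I(H)^{(s_0)})$'' is itself not automatic: Takayama's criterion tests a face $F\subseteq W$ against \emph{all} monomials of $I(G)^{(s_0)}$, including those whose support meets $U$ with exponents $\le \a|_U$, and these impose extra conditions on $F$ beyond the ones coming from $I(H)^{(s_0)}$. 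So the correctness of both the subcomplex identification and the deletion of the $U$-vertices hinges on the very choice you leave open, and the dichotomy you describe (too large on $U$ kills the complex, too small breaks the identification) is a real obstruction, not a presentational one. As it stands the argument establishes only the correct reduction (fix $H\in\B(G)$ realizing $\bc(G)$, exploit odd cycles through each $v\in U$ guaranteed by maximality) but not the conclusion.

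The paper gets around all of this with a much shorter colon-ideal computation that you may want to compare with your setup. Take $x^{\a}$ to be the product of \emph{all} edges of $H$ and $s=|E(H)|$; then $x^{\a}\notin I(G)^{(s+1)}$, and one shows
$$\sqrt{I(G)^{(s+1)}:x^{\a}}=I(\tilde H)+(x_j\mid j\in V(G)\setminus V(H)),$$
where $\tilde H$ is the bipartite completion of $H$. The inclusion of each $x_j$ in the radical is exactly your odd-cycle observation (an induced odd cycle through $j$ inside $V(H_i)\cup\{j\}$ contributes $x_j x^{\a_i}\in I(G)^{(s_i+1)}$), and the restriction to $V(H)$ is handled by \cite[Lemma 3.1]{T} and \cite[Lemma 2.19]{MNPTV} rather than by Takayama's formula. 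Since $\depth S/I\le\depth S/(I:f)\le\depth S/\sqrt{I:f}$ for monomial ideals, and the right-hand side is a sum of edge ideals of complete bipartite graphs plus isolated vertices plus variables, its depth is exactly $c(H)$; multiplying $x^{\a}$ by powers of a single edge of $H$ propagates the bound to all exponents $t\ge s+1$. In short, working with one carefully chosen monomial $x^{\a}$ and the radical of the colon replaces the delicate degree-complex analysis entirely; if you wish to salvage your route, you would in effect have to re-derive this radical computation inside Takayama's framework.
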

In contrast to Eq. \eqref{eq_limit_depth}, we show that the limit depth of symbolic powers of $I(G)$ could be any positive number even when $G$ is a connected graph.

\begin{prop}\label{prop_whisker_1} Let $n \ge 2$ be a positive number and $W_n = W(K_n)$ be the whisker graph on the complete graph on $n$ vertices. Then, $\bc(W_n) = n - 1$ and 
    $$\depth S/I(W_n)^{(s)} = \begin{cases} n & \text{ if } s = 1\\ n-1 & \text{ if } s \ge 2. \end{cases}$$
\end{prop}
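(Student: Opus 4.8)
\emph{Computing $\bc(W_n)$.}  Write $V(W_n)=\{x_1,\dots ,x_n,y_1,\dots ,y_n\}$, where $\{x_i,x_j\}\in E(W_n)$ for all $i\neq j$ and $y_i$ is the whisker attached at $x_i$.  For $n=2$ the graph $W_2$ is the path $y_1x_1x_2y_2$, which is connected and bipartite, so $\bc(W_2)=1$.  For $n\ge 3$ any three of $x_1,\dots ,x_n$ span a triangle, so every induced bipartite subgraph of $W_n$ contains at most two of them; hence the maximal induced bipartite subgraphs are exactly the graphs $H_{ij}$ on $\{x_i,x_j\}\cup\{y_1,\dots ,y_n\}$ with $1\le i<j\le n$, each being the disjoint union of the path $y_ix_ix_jy_j$ with the $n-2$ isolated vertices $y_\ell$ ($\ell\neq i,j$).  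Thus $c(H_{ij})=n-1$ for every $i<j$, and $\bc(W_n)=n-1$.

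\emph{The case $s=1$.}  It is classical that the whisker graph of any graph is Cohen--Macaulay (indeed vertex decomposable), so $S/I(W_n)$ is Cohen--Macaulay; since the independence number of $W_n$ is $n$ (for instance $\{y_1,\dots ,y_n\}$ is a maximal independent set), $\dim S/I(W_n)=n$, and therefore $\depth S/I(W_n)=n$.

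\emph{The case $s\ge 2$: set-up and strategy.}  The minimal primes of $I(W_n)$ are $\p_0=(x_1,\dots ,x_n)$ and $\p_k=(x_1,\dots ,\widehat{x_k},\dots ,x_n,y_k)$ for $1\le k\le n$, all of height $n$; hence $I(W_n)^{(s)}=\p_0^{\,s}\cap\bigcap_{k=1}^n\p_k^{\,s}$, so that a monomial $\prod_i x_i^{a_i}\prod_i y_i^{b_i}$ lies in $I(W_n)^{(s)}$ if and only if
\[
\sum_{i}a_i\ge s\qquad\text{and}\qquad b_k+\sum_{i\neq k}a_i\ge s\ \ \text{ for all }k .
\]
I must show $\depth S/I(W_n)^{(s)}=n-1$ for \emph{every} $s\ge 2$.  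Theorem \ref{thm_lim_depth_symbolic_power} only gives $\lim_{s\to\infty}\depth S/I(W_n)^{(s)}\le n-1$ (equivalently, $\min_{s}\depth S/I(W_n)^{(s)}\le n-1$), which does not by itself settle the small exponents, so a direct argument is needed.  The plan is to use Takayama's formula, which expresses $\dim_{\k}H^i_{\m}(S/I(W_n)^{(s)})_{\a}$ for $\a\in\ZZ^{2n}$ in terms of the reduced homology of the degree complexes $\D_{\a}\bigl(I(W_n)^{(s)}\bigr)$, and to prove: (i) for every $\a$, $\D_{\a}\bigl(I(W_n)^{(s)}\bigr)$ is acyclic in all degrees that would feed into $H^i_{\m}$ with $i\le n-2$; and (ii) for a suitable $\a$ supported on $y_1,\dots ,y_n$ one has $\h_{n-2}\bigl(\D_{\a}(I(W_n)^{(s)});\k\bigr)\neq 0$, hence $H^{n-1}_{\m}(S/I(W_n)^{(s)})\neq 0$.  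The membership criterion above makes these complexes explicit: they are unions of coordinate subcomplexes, one for each ``whisker inequality'' $b_k+\sum_{i\neq k}a_i\ge s$, and for $s\ge 2$ (but not for $s=1$) the way these pieces overlap produces the nontrivial homology in degree $n-2$ responsible for the drop from $n$ to $n-1$.

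\emph{An alternative route and the main obstacle.}  Alternatively one may induct on $n$ via the short exact sequence
\[
0\longrightarrow S/\bigl(I(W_n)^{(s)}:y_n\bigr)\xrightarrow{\ \cdot y_n\ }S/I(W_n)^{(s)}\longrightarrow S/\bigl(I(W_n)^{(s)}+(y_n)\bigr)\longrightarrow 0,
\]
after identifying (modulo $y_n$) the ideal $I(W_n)^{(s)}+(y_n)$ with the $s$-th symbolic power of the edge ideal of $K_n$ with whiskers attached at $n-1$ of its vertices, and $I(W_n)^{(s)}:y_n$ with a closely related ideal; the depths of the two outer terms should be $\ge n-2$ and exactly $n-1$, and a cancellation in the long exact sequence in local cohomology upgrades the crude estimate $\depth S/I(W_n)^{(s)}\ge n-2$ to $\ge n-1$.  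Either way, the heart of the argument --- and the step I expect to be hardest --- is the same pair of estimates, made \emph{uniformly in $s$}: that every degree complex $\D_{\a}(I(W_n)^{(s)})$ is acyclic in the range governing $H^{\le n-2}_{\m}$ (equivalently, that the auxiliary colon and quotient ideals are ``almost Cohen--Macaulay''), while for each $s\ge 2$ at least one degree carries homology in degree $n-2$.  Locating precisely where the $s=1$ versus $s\ge 2$ dichotomy --- Cohen--Macaulay versus depth exactly $n-1$ --- sits inside these complexes is the crux.
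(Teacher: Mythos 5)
Your computation of $\bc(W_n)=n-1$ and your treatment of $s=1$ (whisker graphs are Cohen--Macaulay of dimension $n$) are correct, and your description of the minimal primes $\p_0,\p_1,\dots,\p_n$ and the resulting membership criterion for $I(W_n)^{(s)}$ is accurate. But the case $s\ge 2$, which is the substance of the proposition, is not proved: you lay out two strategies (Takayama's formula on degree complexes, or induction on $n$ via the colon by $y_n$) and then explicitly defer the two estimates that would make either one work --- the acyclicity of all degree complexes in the range governing $H^{\le n-2}_{\m}$, and the nonvanishing of $\h_{n-2}$ for some degree when $s\ge 2$. You yourself call this ``the crux'' and leave it open, so as written this is a plan, not a proof.

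For comparison, the paper settles both bounds with much lighter tools. The upper bound needs no homology computation in a varying degree: since $I(W_n)^{(s)}:e=I(W_n)^{(s-1)}$ for a leaf edge $e$ (Lemma~\ref{lem_colon_leaf}), the depth function is non-increasing, so it suffices to treat $s=2$; there one computes directly that $I(W_n)^{(2)}:(x_1x_2)=\p_1\cap\p_2=(x_1y_1,x_2y_2,x_1x_2,y_1y_2,x_3,\dots,x_n)$, whose quotient is a $4$-cycle tensored with a polynomial ring and has depth $n-1$, and Lemma~\ref{lem_upperbound} gives $\depth S/I(W_n)^{(2)}\le n-1$. The lower bound is a double induction on $k$ and $s$ for the intermediate graphs $G_k=K_n\cup\{x_iy_i\mid i\le k\}$, using $\depth S/I\in\{\depth S/(I:f),\,\depth S/(I,f)\}$ with $f=x_ky_k$: the colon branch drops $s$ by one via Lemma~\ref{lem_colon_leaf}, and the quotient branch splits as $(J,x_k)\cap(J,y_k)$, reducing $k$. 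If you want to salvage your write-up, the shortest route is to adopt this colon-by-a-leaf-edge reduction rather than attempt the uniform-in-$s$ analysis of degree complexes, which is genuinely harder and which you have not carried out.
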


We also note that the inequality in Theorem \ref{thm_lim_depth_symbolic_power} could be strict as given in the following example.
\begin{exm}\label{exm_b}
    Let $W$ be the graph obtained by gluing two whiskers at the vertices of a $3$-cycle. Then $\bc(W) = 3$ while 
    $$\depth S/I(W)^{(s)} = \begin{cases} 7 & \text{ if } s = 1\\ 4 & \text{ if } s = 2 \\ 2 & \text{ if } s \ge 3.\end{cases}$$
\end{exm}

Nonetheless, if we cluster the isolated points in a maximal bipartite subgraph $H$ of $G$ by the bouquets in $G$ then we obtain a finer invariant of $G$ that gives the stable value of depth of symbolic powers. More precisely, assume that $H = H_1 \cup \cdots \cup H_c \cup \{p_1, \ldots, p_t\}$ where $H_i$ are connected components of $H$ with at least one edge and $p_1, \ldots, p_t$ are isolated points in $H$. We says that $p_{i_1}, \ldots, p_{i_u}$ are clustered if there exists a $v \in V(G) \setminus V(H)$ such that the induced subgraph of $G$ on $\{v, p_{i_1}, \ldots, p_{i_u}\}$ is a bouquet. Let $\bou_G(H)$ be the smallest number $b$ such that the set $\{p_1, \ldots, p_t\}$ can be clustered into $b$ bouquets in $G$. We call $c'(H) = c + \bou_G(H)$ the number of restricted connected components of $H$. We then define $\bc'(G) = \min \{c'(H)\mid H \in \B(G)\}$ the restricted bipartite connectivity number of $G$. It is easy to see that for the graph $W$ in Example \ref{exm_b}, we have $\bc'(W) = 2$. We conjecture that 

\begin{conj}\label{conj_stable_depth_symbolic}
    Let $G$ be a simple graph. Then 
    $$\lim_{s \to \infty} \depth S/I(G)^{(s)} = \bc'(G).$$
\end{conj}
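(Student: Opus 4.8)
The plan is to prove the two inequalities $\lim_{s\to\infty}\depth S/I(G)^{(s)}\le\bc'(G)$ and $\lim_{s\to\infty}\depth S/I(G)^{(s)}\ge\bc'(G)$, whose conjunction is Conjecture~\ref{conj_stable_depth_symbolic}. Since each isolated vertex of an $H\in\B(G)$ has all of its $G$-neighbours outside $V(H)$, it is a one-leaf bouquet, so $\bc'(G)\le\bc(G)$ and the first inequality is a genuine strengthening of Theorem~\ref{thm_lim_depth_symbolic_power}; the second is the new content. Throughout I would work with Takayama's formula, which expresses $\dim_{\k}H^i_{\m}(S/I(G)^{(s)})_{\a}$ as $\dim_{\k}\h_{i-|G_{\a}|-1}(\Delta_{\a};\k)$ for an explicit simplicial complex $\Delta_{\a}$ built from the degree $\a\in\ZZ^n$ and from the inequalities $\x^{\a}\in I(G)^{(s)}\iff\sum_{i\in C}a_i\ge s$ for every minimal vertex cover $C$ of $G$, together with $\depth S/I(G)^{(s)}=\min\{i\mid H^i_{\m}(S/I(G)^{(s)})\ne 0\}$. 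As a cross-check I would keep in reserve the reformulation, via the results of Hoa--Kimura--Terai--Trung and of Varbaro, that the limit equals $n-\ell_s(I(G))$, i.e. that $\bc'(G)$ is the codimension of the fiber cone of the vertex cover algebra of $G$.

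For the upper bound, fix $H=H_1\cup\cdots\cup H_c\cup\{p_1,\dots,p_t\}\in\B(G)$ with $c'(H)=\bc'(G)$, and let $B_1,\dots,B_b$ be bouquets in $G$ clustering $\{p_1,\dots,p_t\}$, with centres $v_1,\dots,v_b\notin V(H)$, so $c+b=\bc'(G)$. The idea is to feed Takayama's formula a degree $\a$ with $G_{\a}=\emptyset$ supported on $V(H)\cup\{v_1,\dots,v_b\}$, with entries growing linearly in $s$, chosen so that $\Delta_{\a}$ splits as a join of $c+b$ subcomplexes, one per component $H_i$ and one per bouquet $B_j$, each of which is disconnected and hence has nonzero $\h_0$. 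The homology of a join then forces $\h_{c+b-1}(\Delta_{\a};\k)\ne 0$, so $H^{\bc'(G)}_{\m}(S/I(G)^{(s)})_{\a}\ne 0$ and $\depth S/I(G)^{(s)}\le\bc'(G)$ for $s\gg 0$. The point that goes beyond Theorem~\ref{thm_lim_depth_symbolic_power} is that an entire bouquet $B_j$, once its leaves and centre are weighted correctly, contributes a single disconnected join factor rather than one factor per leaf; the maximality of $H$ is what prevents a vertex outside $V(H)\cup\{v_1,\dots,v_b\}$ from turning one of the factors into a cone and annihilating the top homology.

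For the lower bound the target is $\depth S/I(G)^{(s)}\ge\bc'(G)$ for every $s\ge 1$, i.e. $\h_{i-|G_{\a}|-1}(\Delta_{\a};\k)=0$ for every $\a$ and every $i<\bc'(G)$. I would try to establish the sharper combinatorial statement that every Takayama complex of $S/I(G)^{(s)}$ is $\bigl(\bc'(G)-|G_{\a}|-2\bigr)$-connected in reduced homology, by induction on $|V(G)|$. The inductive step should track the two moves that create the $H_i$'s and the $B_j$'s: deleting a vertex of $G$, which turns $\Delta_{\a}$ into a link or a deletion and changes $\bc'$ by a controlled amount, and peeling off a whisker or an entire bouquet, where symbolic powers of $I(G)$ reduce to those of the pieces. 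A parallel route would be to build a filtration of $S/I(G)^{(s)}$ whose successive quotients are, up to polynomial extensions, symbolic powers of the edge ideals of the $H_i$'s and of the bouquets $B_j$, each of depth at least $1$, and then add up depths along the filtration to reach $c+b=\bc'(G)$.

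The main obstacle is the lower bound, and within it the invariant $\bou_G(H)$. Unlike $c(H)$ it is not intrinsic to $H$: it records how $G$ wraps around the isolated vertices of $H$, so neither the homological vanishing nor the filtration can be read off from $H$ alone, and in the induction one must keep track, for the relevant $\a$, of which leaves of $H$ a common neighbour has already forced together. I therefore expect to first settle the conjecture for odd cycles and for the whisker graphs $W(K_n)$ of Proposition~\ref{prop_whisker_1}, where $\bou_G(H)$ is as large as it can be and the answer is already known, then for graphs carrying a single ``difficult'' bouquet, and only afterwards attempt the general inductive argument. Those base cases should also reveal whether $\charr\k$ can enter; if it does, the clean equality in Conjecture~\ref{conj_stable_depth_symbolic} would need a hypothesis worth recording.
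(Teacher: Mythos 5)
You are proving a statement that the paper itself records only as a conjecture: the paper establishes the weaker upper bound $\lim_{s\to\infty}\depth S/I(G)^{(s)}\le\bc(G)$ (Theorem \ref{thm_lim_depth_symbolic_power}) and verifies the equality with $\bc'(G)$ only for the whisker graphs $W_\a$ of complete graphs (Theorem \ref{thm_whisker_complete_graph}) and for odd cycles, so there is no general proof to compare yours against, and your text must be judged as a research plan. As such it has two genuine gaps. First, even the upper bound $\lim_{s\to\infty}\depth S/I(G)^{(s)}\le\bc'(G)$ is strictly stronger than Theorem \ref{thm_lim_depth_symbolic_power} (one has $\bc'(G)\le\bc(G)$, with strict inequality in Example \ref{exm_b}), and the one step that would deliver the improvement --- choosing a degree $\a$ so that the Takayama complex $\Delta_{\a}$ splits as a join of exactly $c+\bou_G(H)$ disconnected factors, with each bouquet $B_j$ contributing a single factor --- is asserted, not constructed. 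You give neither the weights on the centres $v_j$ and their leaves nor any argument that $\Delta_{\a}$ is a join: the centres $v_j$ lie in $V(G)\setminus V(H)$, so by maximality of $H$ each of them closes an odd cycle with some component $H_i$, and it is precisely this interaction (which in the paper's Eq.\ \eqref{eq_3_1} forces $x_{v_j}$ into the radical of the colon ideal, i.e.\ kills the centre) that threatens to cone off or merge your would-be factors once $v_j$ is given positive weight. Second, the lower bound $\depth S/I(G)^{(s)}\ge\bc'(G)$ is by your own account left open; the proposed induction does not say how $\bc'$ behaves under vertex deletion (it can jump, since deleting a vertex may destroy or merge bouquets), and the proposed filtration is not exhibited. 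So the proposal does not prove the statement, nor even the cases the paper does settle.

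Where the paper does verify instances of the conjecture, it uses a mechanism different from yours and worth noting if you pursue this. For $W_\a$ the upper bound is obtained by computing $I(W_\a)^{(n)}:(x_1\cdots x_n)$ exactly (Lemma \ref{lem_colon_whisker}, via Sullivant's clique description of symbolic powers for chordal graphs): the colon is taken by the product of the clique vertices, which are exactly the bouquet centres, and the resulting ideal retains the factor $(y_{1,*})\cdots(y_{n,*})$ rather than killing the leaves; the Nerve complex of its Stanley--Reisner complex is then an $(n-2)$-sphere, so each bouquet contributes one unit of homological degree. The lower bound is an induction on $n$ and $s$ using Lemma \ref{lem_depth_colon}, Lemma \ref{lem_colon_leaf}, and the splitting $(J,xy)=(J,x)\cap(J,y)$ with Lemma \ref{lem_short_exact_seq}. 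If you want to attack your listed special cases, locating such a monomial $f$ (a product of bouquet centres and edges of $H$) with $I(G)^{(s)}:f$ explicitly computable, and then inducting for the lower bound, is likely more tractable than a general analysis of Takayama complexes.
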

We verify this conjecture for whisker graphs of complete graphs.

\begin{thm}\label{thm_whisker_complete_graph} Let $\a = (a_1, \ldots, a_n) \in \NN^n$ and $W_{\a}$ be the graph obtained by gluing $a_i$ leaves to the vertex $i$ of a complete graph $K_n$. Assume that $a_i\ge 1$ for all $i = 1, \ldots, n$. Then $\bc'(W_\a) = n - 1$ and 
$$\lim_{s \to \infty } \depth S/I(W_{\a})^{(s)} = n-1.$$    
\end{thm}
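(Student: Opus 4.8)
The plan is to establish the two assertions separately: the combinatorial identity $\bc'(W_{\a}) = n-1$, and the asymptotic depth formula $\lim_{s\to\infty}\depth S/I(W_{\a})^{(s)} = n-1$. For the combinatorial part, I would first analyze the maximal induced bipartite subgraphs $H$ of $W_{\a}$. Writing $C = \{1,\ldots,n\}$ for the clique vertices and, for each $i$, $L_i$ for the set of $a_i$ leaves attached to $i$, the key observation is that any induced bipartite subgraph can contain at most two clique vertices (three would induce a triangle). If $H$ contains clique vertices $i,j$ together with the edge $\{i,j\}$, then adding any leaf of $i$ or $j$ keeps bipartiteness, so maximality forces $L_i\cup L_j\subseteq V(H)$; moreover for $k\notin\{i,j\}$ the vertex $k$ cannot be added, but all of $L_k$ consists of isolated points of $H$ that must be included. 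So a typical maximal bipartite $H$ consists of one ``edge component'' (the star-like piece on $\{i,j\}\cup L_i\cup L_j$ — actually a double star) plus the isolated leaves $\bigcup_{k\neq i,j} L_k$. Then $c(H)=1$, and I must compute $\bou_{W_{\a}}(H)$: the isolated points $L_k$ ($k\neq i,j$) can be clustered through the vertex $k\in V(G)\setminus V(H)$ into one bouquet each (since $k$ together with $L_k$ is a star, hence a bouquet), and no two different $L_k, L_{k'}$ can be merged into a single bouquet because a bouquet has a unique center adjacent to all leaves and no clique vertex is adjacent to another clique vertex's leaves. Hence $\bou_{W_{\a}}(H) = n-2$ and $c'(H) = 1 + (n-2) = n-1$. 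One should also check the other shapes of maximal $H$ (e.g. those using only one clique vertex, or an isolated clique vertex) give $c'(H)\ge n-1$ as well, so $\bc'(W_{\a}) = n-1$.

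For the depth formula, the upper bound $\lim_{s\to\infty}\depth S/I(W_{\a})^{(s)} \le n-1$ should follow formally: by the result of \cite{HKTT} and \cite{Va} the limit equals $\min_{s\ge 1}\depth S/I(W_{\a})^{(s)}$, and I expect it to be bounded by $\bc'(W_{\a})=n-1$ either by quoting/adapting Theorem \ref{thm_lim_depth_symbolic_power} (which gives $\le \bc(W_{\a})$) together with the observation that for $W_{\a}$ one has $\bc(W_{\a}) = n-1$ as well when all $a_i\ge 1$ — indeed with $a_i\ge 1$ each leaf set $L_k$ is nonempty so the counting above shows $c(H) + t$-type bounds already force $\bc(W_{\a}) = n-1$ — or more directly by exhibiting, for large $s$, a prime $\p\in\Ass(S/I(W_{\a})^{(s)})$ with $\depth (S/I(W_{\a})^{(s)})_{\p}$-type localization argument cutting the depth down to $n-1$. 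Either route reduces the upper bound to the combinatorial count already done.

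The main work, and the step I expect to be the principal obstacle, is the lower bound $\depth S/I(W_{\a})^{(s)} \ge n-1$ for all $s$. The strategy is to use the polarization/combinatorial description of symbolic powers of edge ideals: $I(W_{\a})^{(s)} = \bigcap_{\{i,j\}\in E} (x_i,x_j)^s$, and to compute its depth via a squarefree model and Reisner-type or Takayama-type formulas, or via the Lyubeznik/Hochster machinery on $\h_i$ of appropriate simplicial complexes. Concretely, I would try to find a short exact sequence or a system of parameters: the $n-1$ elements coming from the ``star directions'' — for instance, after a suitable choice, linear forms $x_{i_0} + x_{i_1}, \ldots$ built from one leaf per clique vertex together with differences $\ell_k - \ell_{k'}$ inside each $L_k$ — and show they form a regular sequence on $S/I(W_{\a})^{(s)}$, or alternatively induct on $n$ by deleting a clique vertex and its bouquet and tracking how symbolic powers behave under such deletions (using the fact that bouquets contribute cleanly to depth, cf. the techniques of \cite{HLT}). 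The delicate point is that symbolic powers do not behave as simply as ordinary powers under restriction to subgraphs, so controlling $\depth S/I(W_{\a})^{(s)}$ uniformly in $s$ will require either an explicit free resolution of the squarefree polarization of $I(W_{\a})^{(s)}$ (feasible because whisker graphs of complete graphs have very structured symbolic Rees algebras), or a careful Mayer--Vietoris / Hochster-formula bookkeeping showing the relevant reduced cohomology vanishes in all but the top $n-1$ degrees; I anticipate this cohomological vanishing to be the crux of the argument.
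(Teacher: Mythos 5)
Your treatment of $\bc'(W_\a)=n-1$ is essentially the paper's Step~1 and is fine. The rest has two genuine problems. First, your upper-bound route rests on the claim that $\bc(W_\a)=n-1$ whenever all $a_i\ge 1$; this is false. For the maximal induced bipartite subgraph $H$ on $\{x_i,x_j\}\cup Y$, every leaf in $L_k$ with $k\notin\{i,j\}$ is its own connected component, so $c(H)=1+\sum_{k\neq i,j}a_k$, and hence $\bc(W_\a)=1+a_3+\cdots+a_n$ (ordering $a_1\ge\cdots\ge a_n$), which exceeds $n-1$ as soon as some $a_k\ge 2$ with $k\ge 3$. This is exactly why the paper introduces $\bc'$: Theorem \ref{thm_lim_depth_symbolic_power} alone cannot yield the bound $n-1$ here. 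The paper instead proves the exact colon formula $I(W_\a)^{(n)}:(x_1\cdots x_n)=I(W_\a)+(y_{1,1},\ldots,y_{1,a_1})\cdots(y_{n,1},\ldots,y_{n,a_n})$ (Lemma \ref{lem_colon_whisker}, via Sullivant's clique description of symbolic powers for chordal graphs), observes that the Stanley--Reisner complex of this ideal has $n$ facets whose nerve is the boundary of an $(n-1)$-simplex, and applies the Nerve Theorem and Hochster's formula to get $\h_{n-2}\neq 0$, hence $\depth\le n-1$. Your vague alternative (``a localization argument cutting the depth down to $n-1$'') is not a substitute for this computation.

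Second, the lower bound $\depth S/I(W_\a)^{(s)}\ge n-1$ --- which you yourself identify as the crux --- is left as a menu of possible strategies (regular sequences of linear forms, explicit resolutions, Mayer--Vietoris bookkeeping) with none carried out, so the proposal does not constitute a proof. For the record, the paper's argument is elementary: it inducts on $\a$ and $s$ using the leaf edge $x_1y_{1,a_1}$. By Lemma \ref{lem_depth_colon} the depth of $S/I(W_\a)^{(s)}$ equals that of either $S/(I(W_\a)^{(s)}:x_1y_{1,a_1})$ or $S/(I(W_\a)^{(s)},x_1y_{1,a_1})$; the colon equals $I(W_\a)^{(s-1)}$ by Lemma \ref{lem_colon_leaf} (inducting on $s$), while $(J,x_1y_{1,a_1})=(J,x_1)\cap(J,y_{1,a_1})$ splits into symbolic powers of smaller whisker graphs $W_{\a'}$ and $W_{\a''}$, handled by induction together with Lemma \ref{lem_short_exact_seq} and Lemma \ref{lem_depth_variables_clearing}. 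Your idea of ``deleting a clique vertex and its bouquet'' is in this spirit, but without the two concrete facts --- $I^{(s)}:e=I^{(s-1)}$ for a leaf edge $e$, and the intersection decomposition of $(I^{(s)},e)$ --- the induction does not get off the ground.
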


Finally, we compute the depth of symbolic powers of edge ideals of odd cycles by extending our argument in \cite{MTV}. This shows that the bound for the index of depth stability of symbolic powers of $I$ given in \cite{HLT} is sharp.
\begin{thm}\label{depth_symbolic_power_cycle}
    Let $I(C_n)$ be the edge ideal of a cycle of length $n = 2k+1 \ge 5$. Then 
    $$\depth S/I(C_n)^{(s)} = \begin{cases} \lceil \frac{n-1}{3} \rceil & \text { if } s = 1\\
    \max (1,\lceil \frac{n-s+1}{3} \rceil) & \text { if } s \ge 2.\end{cases}$$
    In particular, $\sdstab(I(C_n)) = n-2$, where $\sdstab(I)$ is the index of depth stability of symbolic powers of $I$.
\end{thm}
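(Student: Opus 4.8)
The plan is to compute $\depth S/I(C_n)^{(s)}$ through Takayama's formula, extending the degree-complex analysis of \cite{MTV}. For $\a\in\ZZ^n$ put $G_\a=\{v:a_v<0\}$; Takayama's formula gives $\dim_\k H^i_\m(S/I(C_n)^{(s)})_\a=\dim_\k\h_{\,i-|G_\a|-1}(\Delta_\a;\k)$, where $\m=(x_1,\dots,x_n)$ and $\Delta_\a=\Delta_\a(I(C_n)^{(s)})$ is the associated degree complex, so that
$$\depth S/I(C_n)^{(s)}=\min\big\{\,|G_\a|+1+j\ \mid\ \a\in\ZZ^n,\ \h_j(\Delta_\a;\k)\neq 0\,\big\}.$$
The proof then amounts to: (i) describing all the complexes $\Delta_\a$; (ii) producing, for each $s$, one $\a$ with $\h_j(\Delta_\a;\k)\neq 0$ and $|G_\a|+1+j$ equal to the claimed value; and (iii) showing that no $\Delta_\a$ realizes a strictly smaller value.

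For (i): since the only odd induced subgraph of $C_n$ is $C_n$ itself, $I(C_n)^{(s)}=\bigcap_C\p_C^{\,s}$ over the minimal vertex covers $C$ of $C_n$, and unwinding this one finds that $F\in\Delta_\a$ if and only if there is a minimal vertex cover $C$ with $C\cap(F\cup G_\a)=\varnothing$ and $\sum_{v\in C}a_v<s$. Hence $\Delta_\a$ is the void complex unless $G_\a$ is independent in $C_n$, and in that case $\Delta_\a$ is obtained from the independence complex of the induced subgraph of $C_n$ on $[n]\setminus(G_\a\cup N(G_\a))$ — a disjoint union of paths if $G_\a\neq\varnothing$, and $C_n$ itself if $G_\a=\varnothing$ — by deleting the faces $F$ for which every admissible $C$ has $\sum_{v\in C}a_v\ge s$. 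As in \cite{MTV}, $\Delta_\a$ depends only on $G_\a$ and on the truncated weights $\min(a_v,s)$, and which faces are deleted is controlled by the distribution of the weight along the arcs cut out by $G_\a$; the genuinely new feature is the family of ``wrap-around'' complexes with $G_\a=\varnothing$, where the small size-$(k+1)$ covers — equivalently the odd hole $x_1\cdots x_n\in I(C_n)^{(k+1)}$ — enter. I expect the induction to be organized, as usual, through the short exact sequences $0\to S/(I(C_n)^{(s)}:x_v)\to S/I(C_n)^{(s)}\to S/(I(C_n)^{(s)},x_v)\to 0$, in which $S/(I(C_n)^{(s)},x_v)\cong S'/I(C_n-v)^{(s)}$ for the path $C_n-v=P_{n-1}$ (reducing to the path case) and the colon ideal lowers the exponent along the two edges at $v$.

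For (ii) and the case $s=1$: here the weight constraint is vacuous, so $\Delta_\a$ is literally the independence complex of a disjoint union of paths (or of $C_n$), and $\depth S/I(C_n)=\lceil(n-1)/3\rceil$ follows by combining Reisner's criterion, Kozlov's determination of $\h_*(\operatorname{Ind}(P_m))$ and $\h_*(\operatorname{Ind}(C_m))$, and the fact that the independence complex of a disjoint union is the join of the pieces (so its reduced homology is a single group, in the expected degree), and then optimizing $|G_\a|+1+j$. For $s\ge 2$, the upper bound $\depth S/I(C_n)^{(s)}\le\max(1,\lceil(n-s+1)/3\rceil)$ is obtained by choosing $\a$ so that the surviving complex is forced down to the independence complex of a union of arcs of total length $n-s+O(1)$ — take $G_\a$ an independent set and $\{0,1\}$-valued weights on a block of about $s$ further vertices chosen so that every minimal vertex cover meeting that block is too heavy — after which Kozlov's formula pins the homology in degree $\max(1,\lceil(n-s+1)/3\rceil)-1-|G_\a|$; once $n-s+1\le 3$ a similar choice realizes the value $1$, which is then optimal since part (iii) in particular forces $\depth S/I(C_n)^{(s)}\ge 1$ (consistently with $\bc(C_n)=1$ and Theorem \ref{thm_lim_depth_symbolic_power}).

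The main obstacle is (iii): proving that for every $\a$ one has $\h_j(\Delta_\a;\k)=0$ whenever $|G_\a|+1+j<\max(1,\lceil(n-s+1)/3\rceil)$, i.e.\ that every degree complex is sufficiently highly connected. One stratifies by $|G_\a|$ and by the weight pattern. When $G_\a\neq\varnothing$, $\Delta_\a$ is built from independence complexes of disjoint unions of paths with certain faces deleted, and one shows by induction — via Mayer--Vietoris and the join decomposition, as in \cite{MTV} — that it is contractible or has reduced homology only in the allowed range; this is essentially the path estimate transported along the arcs. When $G_\a=\varnothing$ one must additionally bound the connectivity of the wrap-around complexes, and the genuinely new point is to check that the small covers coming from the odd hole do not create low-degree homology in the presence of the weight constraint; the cleanest route I foresee is a Mayer--Vietoris argument splitting $C_n$ at a suitable vertex so as to reduce to the already-treated path complexes. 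Granting (i)--(iii) the formula follows; and since $\max(1,\lceil(n-s+1)/3\rceil)$ equals $1$ exactly for $s\ge n-2$ while it equals $\lceil 4/3\rceil=2$ at $s=n-3$, the depth of symbolic powers first attains (and then retains) its limit value $1$ at $s=n-2$, so $\sdstab(I(C_n))=n-2$, which meets the upper bound of \cite{HLT} and hence shows it is sharp.
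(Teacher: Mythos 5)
Your proposal takes a genuinely different route (Takayama's formula and a direct homological analysis of the degree complexes $\Delta_\a$), but it contains a genuine gap at exactly the point where the theorem is hard. For $k+1\le s\le 2k-1$ the entire content of the statement is the lower bound $\depth S/I(C_n)^{(s)}\ge \lceil (n-s+1)/3\rceil$, which in your framework is the vanishing of $\h_j(\Delta_\a;\k)$ for every $\a$ with $|G_\a|+1+j$ below the claimed value. This is your step (iii), which you yourself call ``the main obstacle'' and then do not carry out: asserting that the case $G_\a\ne\varnothing$ ``is essentially the path estimate transported along the arcs'' and that the wrap-around complexes with $G_\a=\varnothing$ should succumb to ``a Mayer--Vietoris argument splitting $C_n$ at a suitable vertex'' is a statement of intent, not an argument. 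The wrap-around case is precisely where the size-$(k+1)$ covers coming from the odd hole alter the degree complexes relative to the ordinary power, and no bound on their connectivity is actually established. The upper-bound witness in (ii) is likewise not pinned down (``a block of about $s$ further vertices'', ``total length $n-s+O(1)$''): to certify $\depth\le\lceil(n-s+1)/3\rceil$ you must exhibit a specific $\a$ and a specific nonvanishing class, not a heuristic count.

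For contrast, the paper closes both halves without any new topology by exploiting the expansion $I(C_n)^{(s)}=\sum_{j}I(C_n)^{s-j(k+1)}f^j$ with $f=x_1\cdots x_n$. For $s\le k$ symbolic and ordinary powers coincide; for $k+1\le s\le 2k-1$ one has $I^{(s)}=I^s+fI^{s-k-1}$, whence $I^{(s)}:(e_2\cdots e_{s-1})=I^s:(e_2\cdots e_{s-1})$ (giving the upper bound from the known ordinary-power estimate), $I^{(s)}:f=I^{s-k-1}$ computed prime by prime, and $I^{(s)}+I(H)=I^s+I(H)$ for every nonempty subgraph $H$ since $f\in I(H)$. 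The split $\depth S/I^{(s)}\in\{\depth S/(I^{(s)}:f),\depth S/(I^{(s)},f)\}$ and the telescoping decomposition $(I^{(s)},f)=(I^{(s)},e_1)\cap(I^{(s)},x_3\cdots x_n)=\cdots$ then reduce everything to the path and cycle estimates for ordinary powers already proved in \cite{MTV}. If you want to keep the degree-complex approach, you would need to supply a full connectivity analysis of the complexes $\Delta_\a$ for $I(C_n)^{(s)}$, including the $G_\a=\varnothing$ stratum; until then the proposal is a plausible programme rather than a proof. Your final deduction that $\sdstab(I(C_n))=n-2$ from the displayed formula is correct.
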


We structure the paper as follows. In Section \ref{sec_pre}, we set up the notation and provide some background. In Section \ref{sec_bound_stable_value}, we prove Theorem \ref{thm_lim_depth_symbolic_power} and compute the depth of symbolic powers of edge ideals of whisker graphs of complete graphs. In Section \ref{sec_depth_symbolic_power}, we prove Theorem \ref{depth_symbolic_power_cycle}.

\section{Preliminaries}\label{sec_pre}

In this section, we recall some definitions and properties concerning depth, graphs and their edge ideals, and the symbolic powers of squarefree monomial ideals. The interested readers are referred to \cite{BH} for more details.

Throughout the paper, we denote by $S = \k[x_1,\ldots, x_n]$ a standard graded polynomial ring over a field $\k$. Let $\m = (x_1,\ldots, x_n)$ be the maximal homogeneous ideal of $S$.

\subsection{Depth} For a finitely generated graded $S$-module $L$, the depth of $L$ is defined to be
$$\depth(L) = \min\{i \mid H_{\m}^i(L) \ne 0\},$$
where $H^{i}_{\m}(L)$ denotes the $i$-th local cohomology module of $L$ with respect to $\m$. We have the following estimates on depth along short exact sequences (see \cite[Proposition 1.2.9]{BH}).

\begin{lem}\label{lem_short_exact_seq}
    Let $0 \to L \to M \to N \to 0$ be a short exact sequence of finitely generated graded $S$-modules. Then 
    \begin{enumerate}
        \item $\depth M \ge \min \{\depth L, \depth N \},$
        \item $\depth L \ge \min \{\depth M, \depth N + 1 \}.$
    \end{enumerate}
\end{lem}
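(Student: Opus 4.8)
The plan is to deduce both inequalities from the long exact sequence of local cohomology associated to the given short exact sequence. Applying the functors $H^i_{\m}(-)$ to $0 \to L \to M \to N \to 0$ produces the long exact sequence
$$\cdots \to H^{i-1}_{\m}(N) \to H^i_{\m}(L) \to H^i_{\m}(M) \to H^i_{\m}(N) \to H^{i+1}_{\m}(L) \to \cdots,$$
and the only input I need is the defining property $\depth(-) = \min\{i \mid H^i_{\m}(-) \neq 0\}$, which says precisely that $H^i_{\m}(X) = 0$ for every $i < \depth X$.

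For part (1), I set $m = \min\{\depth L, \depth N\}$ and fix an arbitrary $i < m$. Then $i < \depth L$ and $i < \depth N$, so both $H^i_{\m}(L)$ and $H^i_{\m}(N)$ vanish. The three-term exact piece $H^i_{\m}(L) \to H^i_{\m}(M) \to H^i_{\m}(N)$ then has vanishing outer terms and therefore forces $H^i_{\m}(M) = 0$. Since this holds for every $i < m$, I conclude $\depth M \ge m$, which is exactly the claimed bound.

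For part (2), I set $m = \min\{\depth M, \depth N + 1\}$ and fix $i < m$. From $i < \depth M$ I get $H^i_{\m}(M) = 0$, and from $i < \depth N + 1$, equivalently $i - 1 < \depth N$, I get $H^{i-1}_{\m}(N) = 0$. The three-term exact piece $H^{i-1}_{\m}(N) \to H^i_{\m}(L) \to H^i_{\m}(M)$ then has vanishing outer terms and forces $H^i_{\m}(L) = 0$, so $\depth L \ge m$.

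The argument is purely formal, so there is no genuine obstacle to overcome; the only point demanding care is the index bookkeeping in part (2). There the connecting homomorphism shifts cohomological degree by one, so it is the term $H^{i-1}_{\m}(N)$ rather than $H^i_{\m}(N)$ that bounds $H^i_{\m}(L)$ from the left, and this shift is precisely what produces the $+1$ on $\depth N$ in the second inequality.
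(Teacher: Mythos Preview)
Your proof is correct. The paper does not actually prove this lemma; it simply cites \cite[Proposition 1.2.9]{BH}. Your argument via the long exact sequence in local cohomology, using the paper's own definition $\depth(-)=\min\{i\mid H^i_{\m}(-)\neq 0\}$, is a standard and fully rigorous way to establish both inequalities, and the index bookkeeping in part (2) is handled correctly.
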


We make repeated use of the following two results in the sequence. The first one is \cite[Corollary 1.3]{R}. The second one is \cite[Theorem 4.3]{CHHKTT}.

\begin{lem}\label{lem_upperbound} Let $I$ be a monomial ideal and $f$ a monomial such that $f \notin I$. Then
    $$\depth S/I \le \depth S/(I:f)$$
\end{lem}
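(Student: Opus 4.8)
The plan is to reduce the statement to the case where $f$ is a single variable, and then to analyze the short exact sequence attached to multiplication by that variable.

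First I would reduce to the case $f = x_i$. Writing $f = x_{i_1} \cdots x_{i_d}$ as a product of variables (with repetition) and setting $f_j = x_{i_1} \cdots x_{i_j}$, the multiplicativity of colon ideals gives $(I : f_j) = ((I : f_{j-1}) : x_{i_j})$. Since $I$ is a monomial ideal and each $f_j$ divides $f$, the hypothesis $f \notin I$ forces $f_j \notin I$, equivalently $x_{i_j} \notin (I : f_{j-1})$; thus every colon ideal in the chain is again a proper monomial ideal. Granting the one–variable case, we then obtain the telescoping chain $\depth S/I \le \depth S/(I:f_1) \le \cdots \le \depth S/(I:f_d) = \depth S/(I:f)$, which is the assertion. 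So it suffices to treat a single variable $x = x_i$ with $x \notin I$.

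For a single variable I would use the short exact sequence $0 \to (S/(I:x))(-1) \xrightarrow{\ \cdot x\ } S/I \to S/(I,x) \to 0$, where the first map is injective by the very definition of $(I:x)$ and the cokernel is $S/(I + xS) = S/(I,x)$. Applying Lemma \ref{lem_short_exact_seq}(2) to this sequence yields $\depth S/(I:x) \ge \min\{\depth S/I,\ \depth S/(I,x) + 1\}$ (the grading shift does not affect depth). Consequently the proof is reduced to the single inequality $\depth S/(I,x) \ge \depth S/I - 1$: once this is known, the minimum above equals $\depth S/I$ and we are done.

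The depth–drop estimate $\depth S/(I,x) \ge \depth S/I - 1$ is the heart of the matter and the step I expect to be the main obstacle, because it is precisely here that the monomial structure must be used. Identifying $S/(I,x) = M/xM$ with $M = S/I$, and $xM \cong (S/(I:x))(-1)$, the long exact sequence in local cohomology attached to $0 \to xM \to M \to M/xM \to 0$ reduces the estimate to controlling $H^{i}_{\m}(xM)$ in the range $i \le \depth M - 1$, i.e. to the comparison $\depth(xM) \ge \depth M$. For a general module this comparison can fail; what saves us is that $x$ is a variable (hence a nonzerodivisor on the ambient ring $S$) and $M = S/I$ is the quotient by a monomial ideal, so that $\operatorname{Ass}(xM) \subseteq \operatorname{Ass}(M)$ are monomial primes and the local cohomology of $M$ and of $xM$ admits a combinatorial description.

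Concretely, I would establish the depth–drop by one of two routes, whichever is cleaner to write out: (a) a degree–by–degree comparison of $H^i_{\m}(S/I)$ and $H^i_{\m}(S/(I,x))$ via Takayama's formula, reading off depth from the vanishing of the reduced simplicial (co)homology of the associated degree complexes, where passing from $I$ to $(I,x)$ corresponds to a controlled deletion that lowers connectivity by at most one; or (b) polarization, which replaces $I$ by a squarefree monomial ideal, shifts all depths by the number of added variables, and is compatible both with the colon by $x$ and with the passage to $S/(I,x)$, thereby reducing the estimate to the squarefree case, where Reisner's criterion together with the link–deletion exact sequences on the Stanley–Reisner complex gives the required one–step bound. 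With the depth–drop in hand, the single–variable case, and hence (via the telescoping chain of the first paragraph) the full statement, follows.
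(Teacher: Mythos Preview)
The paper does not prove this lemma; it records the statement and cites \cite[Corollary~1.3]{R} (Rauf), so there is no in-paper argument to compare against.

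Your reduction to a single variable and the use of the short exact sequence $0 \to S/(I:x) \to S/I \to S/(I,x) \to 0$ are correct, and you rightly isolate the depth--drop estimate $\depth S/(I,x) \ge \depth S/I - 1$ as the crux. You also correctly notice that the naive attempt to prove the depth--drop via the same long exact sequence leads back to $\depth(xM) \ge \depth M$, i.e.\ to $\depth S/(I:x) \ge \depth S/I$, which is the single--variable lemma itself. So what you have really established is that the depth--drop and the single--variable lemma are \emph{equivalent} modulo the short exact sequence; an independent argument for one of them is still required.

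The gap is that your routes (a) and (b) for that independent argument remain sketches. Route (b) can indeed be made to work: after polarization one is reduced to the squarefree statement $\depth \k[\mathrm{del}_v \Delta] \ge \depth \k[\Delta] - 1$, and this follows by combining Lemma~\ref{lem_depth_formula} with Mayer--Vietoris for the decomposition $\lk_\Delta F = \mathrm{del}_v(\lk_\Delta F) \cup \mathrm{star}_v(\lk_\Delta F)$ together with the identity $\lk_v(\lk_\Delta F) = \lk_\Delta(F\cup\{v\})$. But neither this Mayer--Vietoris computation nor the claimed compatibility of polarization with the passage from $I$ to $(I,x)$ is actually carried out, and those two verifications are precisely where the monomial hypothesis does its work. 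As written, the proposal stops one step short of a proof.
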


\begin{lem}\label{lem_depth_colon} Let $I$ be a monomial ideal and $f$ a monomial. Then 
$$\depth S/I \in \{\depth (S/I:f), \depth (S/(I,f))\}.$$    
\end{lem}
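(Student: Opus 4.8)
The plan is to realize $S/I$ as the middle term of a short exact sequence whose two outer terms are $S/(I:f)$ and $S/(I,f)$, to sandwich $\depth S/I$ between the depths of those outer terms using the estimates already recorded in Lemma \ref{lem_short_exact_seq} and Lemma \ref{lem_upperbound}, and to pin down the one remaining ambiguous case with the long exact sequence of local cohomology.

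First I would dispose of the trivial case $f \in I$: here $(I,f) = I$, so $\depth S/(I,f) = \depth S/I$ and the assertion holds. So assume $f \notin I$. Multiplication by $f$ defines a homogeneous (degree-shifting) homomorphism $S/(I:f) \xrightarrow{\cdot f} S/I$; one checks directly that it is injective with image $(I,f)/I$, yielding the short exact sequence
$$0 \to S/(I:f) \xrightarrow{\cdot f} S/I \to S/(I,f) \to 0.$$
Since depth is insensitive to the internal grading shift, the depth estimates apply verbatim. Write $a = \depth S/(I:f)$, $b = \depth S/I$, and $c = \depth S/(I,f)$; the goal is to show $b \in \{a, c\}$.

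Next I would extract the two bounds that do most of the work. From Lemma \ref{lem_short_exact_seq}(1) applied to the sequence above, $b \ge \min\{a, c\}$, and from Lemma \ref{lem_upperbound} (using $f \notin I$), $b \le a$. These already settle two of the three cases: if $c \ge a$ then $b \ge \min\{a,c\} = a \ge b$, forcing $b = a$; and if $a = c+1$ then the sandwich $c \le b \le a$ collapses to $b \in \{c, c+1\} = \{c, a\}$. The remaining, and genuinely delicate, case is $a \ge c+2$, where $c \le b \le a$ leaves room for an intermediate value of $b$; ruling this out is the main obstacle.

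To handle that case I would pass to the long exact sequence of local cohomology
$$\cdots \to H^{c}_{\m}(S/(I:f)) \to H^{c}_{\m}(S/I) \to H^{c}_{\m}(S/(I,f)) \to H^{c+1}_{\m}(S/(I:f)) \to \cdots.$$
Since $a = \depth S/(I:f) \ge c+2$, both $H^{c}_{\m}(S/(I:f))$ and $H^{c+1}_{\m}(S/(I:f))$ vanish, so the middle map is an isomorphism $H^{c}_{\m}(S/I) \cong H^{c}_{\m}(S/(I,f)) \ne 0$; and since $H^{i}_{\m}(S/(I,f)) = 0$ for $i < c$ while $H^{i}_{\m}(S/(I:f)) = 0$ for $i < a$, the same sequence gives $H^{i}_{\m}(S/I) = 0$ for $i < c$. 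Hence $b = c$, and in every case $b \in \{a, c\}$, completing the argument. I expect the only real care to lie in this final step, namely verifying that the local cohomology of $S/(I:f)$ vanishes in the two critical degrees $c$ and $c+1$, which is exactly what the hypothesis $\depth S/(I:f) \ge c+2$ of this case supplies; equivalently, one could invoke the third standard depth inequality $\depth S/(I,f) \ge \min\{\depth S/(I:f) - 1,\ \depth S/I\}$ to reach the same contradiction purely formally.
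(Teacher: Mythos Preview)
Your proof is correct. The paper does not give its own argument for this lemma; it simply cites \cite[Theorem 4.3]{CHHKTT}. Your proposal supplies a complete self-contained proof using exactly the ingredients the paper has already recorded (Lemma \ref{lem_short_exact_seq} and Lemma \ref{lem_upperbound}) together with the long exact sequence in local cohomology, and this is indeed the standard route to the result. The alternative you mention at the end---using the third depth inequality $\depth S/(I,f) \ge \min\{\depth S/(I:f)-1,\ \depth S/I\}$ in place of the explicit local cohomology computation---is slightly cleaner and avoids unpacking the long exact sequence by hand, but both variants are equivalent in spirit.
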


Finally, we also use the following simple result.

 \begin{lem}\label{lem_depth_variables_clearing} Let $S = \k[x_1,\ldots,x_n]$, $R_1 = \k[x_1,\ldots,x_a]$, and $R_2 = \k[x_{a+1},\ldots,x_n]$ for some natural number $a$ such that $1 \le a < n$. Let $I$ and $J$ be homogeneous ideals of $R_1$ and $R_2$, respectively. Then 
 \begin{enumerate}
     \item $\depth (S/(I+J)) = \depth (R_1/I) + \depth (R_2/J)$.
     \item Let $P = I + (x_{a+1}, \ldots, x_b)$. Then $ \depth (S/P) = \depth (R_1/I) + (n-b).$
 \end{enumerate}
 \end{lem}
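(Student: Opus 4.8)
\textbf{Plan for proving Lemma \ref{lem_depth_variables_clearing}.}
The plan is to reduce both parts to the standard behavior of depth under tensor products of algebras over a field, using the fact that $I$ and $J$ live in disjoint sets of variables. First I would observe that since $R_1 = \k[x_1,\ldots,x_a]$ and $R_2 = \k[x_{a+1},\ldots,x_n]$ involve disjoint variables, we have a natural isomorphism of graded $\k$-algebras
\begin{equation*}
    S/(I+J) \cong (R_1/I) \otimes_{\k} (R_2/J).
\end{equation*}
This is the algebraic heart of part (1): the ideal $I+J$ in $S$ is exactly the ideal generated by the images of $I$ and $J$ under the two inclusions, so the quotient splits as a tensor product. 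The workhorse I would then invoke is the tensor-product formula for depth of finitely generated modules over the two polynomial rings, namely that for finitely generated graded modules $M_1$ over $R_1$ and $M_2$ over $R_2$,
\begin{equation*}
    \depth_{R_1 \otimes_\k R_2}(M_1 \otimes_\k M_2) = \depth_{R_1}(M_1) + \depth_{R_2}(M_2).
\end{equation*}
Applying this with $M_1 = R_1/I$ and $M_2 = R_2/J$ yields part (1) directly. I would cite this additivity as a known fact (it follows, for instance, from comparing Koszul cohomology or from the behavior of local cohomology under the Künneth formula over a field); alternatively one can give a clean self-contained argument via minimal graded free resolutions, taking the tensor product of a minimal free resolution of $R_1/I$ over $R_1$ with one of $R_2/J$ over $R_2$ to get a minimal free resolution of $S/(I+J)$ over $S$, from which $\pd_S(S/(I+J)) = \pd_{R_1}(R_1/I) + \pd_{R_2}(R_2/J)$, and then converting projective dimensions back to depths via the Auslander--Buchsbaum formula.

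For part (2), I would deduce it as a special case of part (1). Write $P = I + (x_{a+1}, \ldots, x_b)$. The key point is that $J := (x_{a+1}, \ldots, x_b)$ is a homogeneous ideal of $R_2 = \k[x_{a+1},\ldots,x_n]$. Then by part (1),
\begin{equation*}
    \depth(S/P) = \depth(R_1/I) + \depth(R_2/J).
\end{equation*}
It remains to compute $\depth(R_2/J)$, where $R_2/J \cong \k[x_{b+1},\ldots,x_n]$ is itself a polynomial ring in $n-b$ variables. Since a polynomial ring in $n-b$ variables over $\k$ is Cohen--Macaulay of dimension $n-b$, its depth equals $n-b$. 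Substituting gives $\depth(S/P) = \depth(R_1/I) + (n-b)$, as claimed.

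The main obstacle, and the only genuinely substantive point, is establishing the additivity of depth under the tensor product in part (1); everything in part (2) is a routine specialization once part (1) is in hand. If I want the lemma to be fully self-contained rather than citing the tensor-product formula, the cleanest route is the minimal-free-resolution argument: one must verify that the tensor product of two minimal graded complexes over $R_1$ and $R_2$ remains minimal as a complex over $S$ (the differentials still have entries in the respective graded maximal ideals, hence in $\m_S$), and that it is exact and resolves $S/(I+J)$ (which follows because $R_2/J$ is flat--indeed free--over $\k$, so tensoring over $\k$ preserves exactness). From minimality of the resolution we read off the projective dimension additively, and the Auslander--Buchsbaum formula $\depth = n - \pd$ over each ring, together with $n = a + (n-a)$, converts this into the desired depth additivity. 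I would lay out this resolution argument carefully since the subtle step is confirming minimality of the tensored complex over the larger ring $S$.
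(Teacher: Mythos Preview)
Your proposal is correct and follows essentially the same approach as the paper: the paper proves part (1) by citing the standard additivity of depth for ideals in disjoint variable sets (specifically \cite[Lemma 2.3]{NV2}), and derives part (2) from part (1) together with the observation that $\depth(R_2/(x_{a+1},\ldots,x_b)) = n-b$. The only difference is that you supply more detail, including the optional self-contained argument via tensor products of minimal free resolutions and the Auslander--Buchsbaum formula.
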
 
\begin{proof}
Part (1) is standard; for example, see \cite[Lemma 2.3]{NV2}.

Part (2) follows from Part (1) and the fact that $\depth (R_2/(x_{a+1},\ldots,x_b)) = (n-b).$
\end{proof}

\subsection{Depth of Stanley-Reisner rings} Let $\Delta$ be a simplicial complex on the vertex set $V(\Delta) = [n] = \{1,\ldots,n\}$. For a face $F \in \Delta$, the link of $F$ in $\Delta$ is the subsimplicial complex of $\Delta$ defined by 
$$\lk_{\Delta}F=\{G\in\Delta \mid  F\cup G\in\Delta, F\cap G=\emptyset\}.$$

For each subset $F$ of $[n]$, let $x_F=\prod_{i\in F}x_i$ be a squarefree monomial in $S$. We now recall the Stanley-Reisner correspondence.

\begin{defn}For a squarefree monomial ideal $I$, the Stanley-Reisner complex of $I$ is defined by
$$ \Delta(I) = \{ F \subseteq [n] \mid x_F \notin I\}.$$

For a simplicial complex $\Delta$, the Stanley-Reisner ideal of $\Delta$ is defined by
$$I_\Delta = (x_F \mid  F \notin \Delta).$$
The Stanley-Reisner ring of $\Delta$ is $\k[\Delta] =  S/I_\Delta.$
\end{defn}

\begin{defn} The $q$-th reduced homology group of $\Delta$ with coefficients over $\k$, denoted $\h_q(\Delta; \k)$ is defined to be the $q$-th homology group of the augmented oriented chain complex of $\Delta$ over $\k$.
\end{defn}

From the Hochster's formula, we deduce the following:
\begin{lem}\label{lem_depth_formula} Let $\Delta$ be a simplicial complex. Then 
$$\depth (\k[\Delta]) = \min \{ |F| + i \mid \h_{i - 1} (\lk_\Delta F;\k) \neq 0 \mid F \in \Delta\}.$$    
\end{lem}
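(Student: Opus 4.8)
The plan is to read off the depth of $\k[\Delta]$ directly from the $\ZZ^n$-graded structure of its local cohomology, which Hochster's formula expresses in terms of the reduced cohomology of links. Concretely, I would start from the dual form of Hochster's formula: for $\a \in \ZZ^n$ with $\a \le \o$ and support $W = \supp(\a) = \{j \mid a_j \neq 0\}$, one has a $\k$-vector space isomorphism
$$H^i_\m(\k[\Delta])_\a \cong \h^{\,i - |W| - 1}(\lk_\Delta W; \k),$$
while $H^i_\m(\k[\Delta])_\a = 0$ whenever $\a \not\le \o$. Here $\h^{q}$ denotes reduced simplicial cohomology, and $\lk_\Delta W$ is read as the void complex (with vanishing reduced cohomology in all degrees) when $W \notin \Delta$.

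From this I would extract the nonvanishing criterion: $H^i_\m(\k[\Delta]) \neq 0$ if and only if there exists a face $F \in \Delta$ with $\h^{\,i - |F| - 1}(\lk_\Delta F; \k) \neq 0$. The forward implication is immediate, since each nonzero graded piece is governed by some $W = \supp(\a)$, and $W$ must be a face for the corresponding link to be nonvoid. For the reverse implication, observe that for a fixed face $F$ the group $\h^{\,i-|F|-1}(\lk_\Delta F;\k)$ occurs as the graded component $H^i_\m(\k[\Delta])_\a$ for every $\a \le \o$ with $\supp(\a) = F$, so its nonvanishing forces $H^i_\m(\k[\Delta]) \neq 0$.

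Next I would feed this into the definition $\depth \k[\Delta] = \min\{i \mid H^i_\m(\k[\Delta]) \neq 0\}$ to get
$$\depth \k[\Delta] = \min\{ i \mid \exists\, F \in \Delta \text{ with } \h^{\,i - |F| - 1}(\lk_\Delta F; \k) \neq 0\}.$$
Because $\k$ is a field, the universal coefficient theorem identifies $\h^{q}(\lk_\Delta F; \k)$ with the $\k$-dual of $\h_{q}(\lk_\Delta F; \k)$, so the two share the same dimension and reduced cohomology may be replaced by reduced homology throughout. A final reindexing, setting $j = i - |F|$ so that the homological degree becomes $j-1$, rewrites the displayed minimum as $\min\{|F| + j \mid \h_{j-1}(\lk_\Delta F;\k) \neq 0,\ F \in \Delta\}$, which is precisely the asserted formula.

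I expect the main obstacle to be purely bookkeeping: one must track the $-|F|-1$ shift between the cohomological index $i$ of the local cohomology module and the reduced (co)homological degree of the link, and then reabsorb the $|F|$ correctly when recasting everything into the final $|F| + (\text{degree}+1)$ shape. The secondary point demanding care is the treatment of the boundary faces: $F = \emptyset$ gives $\lk_\Delta \emptyset = \Delta$ and contributes the terms $\h_{i-1}(\Delta;\k)$, whereas a non-face $W \notin \Delta$ has a void link contributing nothing. Keeping these conventions consistent is what guarantees that the minimum is genuinely taken over exactly the faces $F \in \Delta$.
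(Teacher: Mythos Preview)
Your proposal is correct and follows essentially the same approach as the paper: the paper's proof simply invokes the definition of depth together with Hochster's formula \cite[Theorem 5.3.8]{BH} without further elaboration, and your argument is precisely the detailed unpacking of that reference, tracking the graded pieces of local cohomology, the link indexing, and the passage between reduced cohomology and homology over a field.
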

\begin{proof}
    By definition, $\depth (k[\Delta]) = \min \{ i \mid H^i_\m (k[\Delta]) \neq 0\}$. By Hochster's formula \cite[Theorem 5.3.8]{BH}, the conclusion follows.
\end{proof}
We will also use the following Nerve Theorem of Borsuk \cite{B}. First, we recall the definition of the Nerve complex. Assume that the set of maximal facets of $\Delta$ is $A = \{A_1,\ldots,A_r\}$. The Nerve complex of $\Delta$, denoted by $N(\Delta)$ is the simplicial complex on the vertex set $[r] = \{1,\ldots,r\}$ such that $F \subseteq [r]$ is a face of $N(\Delta)$ if and only if 
$$\cap_{j\in F} A_j \neq \emptyset.$$

\begin{thm}\label{thm_nerve} Let $\Delta$ be a simplicial complex. Then for all integer $i$, we have 
$$\h_i (N(\Delta);\k) \cong \h_i(\Delta;\k).$$   
\end{thm}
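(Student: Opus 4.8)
The plan is to realize $\Delta$ as the union of its maximal facets and to apply the Mayer--Vietoris (\v{C}ech-to-homology) spectral sequence attached to this cover, exploiting the fact that every intersection of maximal facets is again a simplex. Write $A = \{A_1, \ldots, A_r\}$ for the maximal facets, each regarded as the full simplex on its own vertex set, so that $\Delta = \bigcup_{j=1}^r A_j$ is a cover by subcomplexes. The key combinatorial observation is that for every $F \subseteq [r]$ the intersection $A_F := \bigcap_{j \in F} A_j$ is again a full simplex, namely the one spanned by the common vertices $\bigcap_{j\in F} A_j$; hence $A_F$ is either empty or contractible, and by the very definition of $N(\Delta)$ it is nonempty exactly when $F \in N(\Delta)$.

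Next I would invoke the generalized Mayer--Vietoris spectral sequence of the cover $A$, whose first page, written in ordinary (unreduced) homology $H_*$, is
$$E^1_{p,q} = \bigoplus_{\substack{F \subseteq [r] \\ |F| = p+1}} H_q(A_F; \k) \Longrightarrow H_{p+q}(\Delta; \k),$$
the sum ranging over those $F$ with $A_F \neq \emptyset$. Because each nonempty $A_F$ is contractible, one has $H_q(A_F;\k) = 0$ for $q > 0$ and $H_0(A_F;\k) = \k$, so the whole page collapses onto the row $q = 0$. There the term $E^1_{p,0} = \bigoplus_{|F| = p+1,\, F \in N(\Delta)} \k$ is precisely the simplicial $p$-chain group of $N(\Delta)$, and a direct check of signs identifies the differential $d^1$ with the simplicial boundary map of $N(\Delta)$. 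Thus $E^2_{p,0} = H_p(N(\Delta);\k)$ while all other entries vanish, so the spectral sequence degenerates and yields $H_p(\Delta;\k) \cong H_p(N(\Delta);\k)$ for every $p \geq 0$. Since both $\Delta$ and $N(\Delta)$ are nonempty, passing from unreduced to reduced homology then produces the stated isomorphism $\h_i(N(\Delta);\k) \cong \h_i(\Delta;\k)$ for all $i$.

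The main technical obstacle is to set up this spectral sequence cleanly in the purely combinatorial setting and to verify that its $d^1$ differential really is the nerve boundary, rather than citing the topological statement as a black box. If a fully self-contained combinatorial argument were desired instead, I would proceed by induction on $r$: writing $\Delta = \Delta' \cup A_r$ with $\Delta'$ the subcomplex generated by $A_1, \ldots, A_{r-1}$, the Mayer--Vietoris sequence together with the contractibility of $A_r$ reduces the claim to comparing $\h_*(\Delta' \cap A_r)$ with the reduced homology of $\lk_{N(\Delta)}(r)$. The subtle point there is that $\Delta' \cap A_r = \bigcup_{j<r}(A_j \cap A_r)$ is covered by simplices that need not be its maximal facets, so one must first argue that discarding redundant members of a simplicial cover leaves the homotopy type of its nerve unchanged before the inductive hypothesis can be applied.
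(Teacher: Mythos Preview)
The paper does not prove this statement; it merely cites it as Borsuk's Nerve Theorem \cite{B} and uses it as a black box. So there is no ``paper's own proof'' to compare against.

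Your argument is the standard modern proof via the Mayer--Vietoris (or \v{C}ech) spectral sequence associated to a good cover, and it is correct in outline. The key points---that intersections of maximal facets are themselves full simplices (hence contractible or empty), that the $E^1$ page collapses to the $q=0$ row, and that this row computes the simplicial chain complex of $N(\Delta)$---are all valid. Your caveat about verifying that $d^1$ coincides with the nerve boundary is well taken but routine: the differential is the alternating sum of the maps induced by the inclusions $A_{F\cup\{j\}}\hookrightarrow A_F$, which on $H_0\cong\k$ are identities, so one recovers exactly the simplicial boundary with the usual signs. The inductive alternative you sketch is also viable, and your observation about needing to pass from an arbitrary simplicial cover to one by maximal members without changing the nerve's homotopy type is the genuine subtlety there; this is handled by noting that deleting a cover element contained in another corresponds to removing a cone point from the nerve.
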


\subsection{Graphs and their edge ideals} 

Let $G$ denote a finite simple graph over the vertex set $V(G)=[n] = \{1,2,\ldots,n\}$ and the edge set $E(G)$. The edge ideal of $G$ is defined to be
$$I(G)=(x_ix_j~|~\{i,j\}\in E(G))\subseteq S.$$
For simplicity, we often write $i \in G$ (resp. $ij \in G$) instead of $i \in V(G)$ (resp. $\{i,j\} \in E(G)$). By abuse of notation, we also call $x_i$ a vertex of $G$ and $x_i x_j \in I(G)$ an edge of $G$.

A path $P_n$ of length $n -1$ is the graph on $[n]$ whose edges are $\{i,i+1\}$ for $i = 1, \ldots,n-1$. A cycle $C_n$ of length $n \ge 3$ is the graph on $[n]$ whose edges are $\{i,i+1\}$ for $i = 1, \ldots, n-1$ and $\{1,n\}$. 

A clique in $G$ is a complete subgraph of $G$ of size at least $2$.

A graph $H$ on $[n]$ is called bipartite if there exists a partition $[n] = X \cup Y$, $X \cap Y = \emptyset$ such that $E(H) \subseteq X \times Y$. When $E(H) = X \times Y$, $H$ is called a complete bipartite graph, denoted by $K_{X, Y}$. A bouquet is a complete bipartite graph with $|X| = 1$. 

For a vertex $x\in V(G)$, let the neighbourhood of $x$ be the subset $N_G(x)=\{y\in V(G) \mid \{x,y\}\in E(G)\}$, and set $N_G[x]=N_G(x)\cup\{x\}$. The degree of a vertex $x$, denoted by $\deg_G(x)$ is the number of neighbors of $x$. A leaf is a vertex of degree $1$. The unique edge attached to a leaf is called a leaf edge. Denote $d_G(x)$ the number of non-leaf edges incident to $x$.

\subsection{Projective dimension of edge ideals of weakly chordal graphs} A graph $G$ is called weakly chordal if $G$ and its complement do not contain an induced cycle of length at least $5$. The projective dimension of edge ideals of wealy chordal graphs can be computed via the notion of strongly disjoint families of complete bipartite subgraphs, introduced by Kimura \cite{K}. For a graph G, we consider all families of (non-induced) subgraphs $B_1, \ldots, B_g$ of $G$ such that
\begin{enumerate}
    \item each $B_i$ is a complete bipartite graph for $1 \le i \le g$,
    \item the graphs $B_1, \ldots, B_g$ have pairwise disjoint vertex sets,
    \item there exist an induced matching $e_1,\ldots, e_g$ of $G$ for each $e_i \in E(B_i)$ for $1\le i \le g$.
\end{enumerate}
Such a family is termed a strongly disjoint family of complete bipartite subgraphs. We define 
$$ d(G) = \max \left ( \sum_1^g |V (B_i)| - g \right ),$$
where the maximum is taken over all the strongly disjoint families of complete bipartite subgraphs $B_1, \ldots, B_g$ of $G$. We have the following result of Nguyen and Vu \cite[Theorem 7.7]{NV1}.

\begin{thm}\label{thm_pd_weakly_chordal}
    Let $G$ be a weakly chordal graph with at least one edge. Then 
    $$\pd (S/I(G)) = d(G).$$
\end{thm}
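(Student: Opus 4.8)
The plan is to prove the two inequalities $\pd(S/I(G)) \ge d(G)$ and $\pd(S/I(G)) \le d(G)$ separately, reserving weak chordality entirely for the second. Throughout I would pass between depth and projective dimension via the Auslander--Buchsbaum formula $\pd(S/I(G)) = n - \depth(S/I(G))$, so that Lemmas \ref{lem_short_exact_seq}, \ref{lem_depth_colon}, and \ref{lem_depth_variables_clearing} may all be read as statements about $\pd$.

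First, the lower bound $\pd(S/I(G)) \ge d(G)$ holds for an \emph{arbitrary} graph and is due to Kimura \cite{K}; I would recall its mechanism. Given a strongly disjoint family $B_1, \ldots, B_g$ with induced matching $e_1, \ldots, e_g$ realizing $d(G)$, set $W = \bigcup_i V(B_i)$. Since multigraded Betti numbers in a squarefree degree depend only on the induced subgraph supported on that degree, it suffices to exhibit a nonzero Betti number of $\k[W]/I(G[W])$ in homological degree $\sum_i |V(B_i)| - g$. For a single complete bipartite graph $K_{a,b}$ one has $\pd(S/I(K_{a,b})) = a+b-1$ with the extremal Betti number sitting in the full multidegree, and these add over disjoint pieces; the induced matching condition is precisely what forces the product of these classes to remain nonzero on $W$, pinning a nonzero Betti number in degree $\sum_i |V(B_i)| - g$ and giving $\pd(S/I(G)) \ge d(G)$.

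For the reverse inequality I would induct on $|V(G)|$, the edgeless base cases being trivial. Fix a vertex $x$. The colon ideal is $(I(G):x) = (N_G(x)) + I(G \setminus N_G[x])$ and the sum ideal is $(I(G),x) = (x) + I(G \setminus x)$, where $G \setminus N_G[x]$ and $G \setminus x$ are induced subgraphs of $G$, hence again weakly chordal. Clearing the variables of $N_G(x)$ and of $(x)$ via Lemma \ref{lem_depth_variables_clearing} and invoking the induction hypothesis yields the exact values
\begin{align*}
\pd(S/(I(G):x)) &= |N_G(x)| + d(G \setminus N_G[x]), \\
\pd(S/(I(G),x)) &= 1 + d(G \setminus x).
\end{align*}
Applying Lemma \ref{lem_short_exact_seq}(1) to $0 \to S/(I(G):x) \to S/I(G) \to S/(I(G),x) \to 0$ gives $\pd(S/I(G)) \le \max$ of these two numbers, so it suffices to exhibit a single vertex $x$ for which both are at most $d(G)$, i.e.
$$|N_G(x)| + d(G\setminus N_G[x]) \le d(G) \quad \text{and} \quad 1 + d(G\setminus x) \le d(G).$$

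The heart of the argument, and what I expect to be the main obstacle, is to produce such an $x$ and verify these two purely combinatorial inequalities using weak chordality. For the first, the idea is to take an optimal family for $G\setminus N_G[x]$ and adjoin the star $K_{\{x\},N_G(x)}$ on the $1+|N_G(x)|$ fresh vertices of $N_G[x]$; this is strongly disjoint and boosts the value by exactly $|N_G(x)|$, provided one can pick a star edge $\{x,y\}$ which, together with the optimal matching, stays an induced matching of $G$. The danger is that every $y \in N_G(x)$ is adjacent to a matched vertex outside $N_G[x]$; here I would use the structure of weakly chordal graphs (for instance the existence of a two-pair, after Hayward--Ho\`{a}ng--Maffray) to choose $x$ so that $N_G(x)$ cannot be entirely ``blocked'' in this way, or to rebuild a strictly larger family in $G$ when it is. The second inequality instead requires choosing $x$ to be \emph{essential}, i.e.\ covered by every optimal family, so that its deletion strictly lowers the (induced-subgraph monotone) invariant $d$. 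Proving that a weakly chordal graph always admits one vertex meeting both demands simultaneously, and carrying out the induced-matching bookkeeping in each branch, is the technical crux on which the whole induction rests.
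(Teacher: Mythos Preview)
The paper does not prove this theorem at all: it is quoted verbatim as \cite[Theorem 7.7]{NV1} and used only as a black box to compute $\depth(S/I(W_\a))$ in Lemma~\ref{lem_depth_whisker}. So there is no ``paper's own proof'' to compare against, and any complete argument you supply would already go beyond what the paper does.

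That said, your proposal is not a proof but a strategy with its central step missing. You correctly isolate the lower bound as Kimura's theorem and set up the natural induction for the upper bound via the exact sequence on $x$, but everything then hinges on finding a single vertex $x$ that simultaneously satisfies $|N_G(x)| + d(G\setminus N_G[x]) \le d(G)$ and $d(G\setminus x) \le d(G)-1$, and you yourself flag this as ``the technical crux on which the whole induction rests'' without carrying it out. These two demands pull in different directions: the second asks that $x$ lie in \emph{every} optimal family, while the first asks for some $y\in N_G(x)$ with no neighbour among the matched vertices of an optimal family for $G\setminus N_G[x]$, and there is no a priori reason a two-pair (or any other structural vertex of a weakly chordal graph) meets both. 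Indeed, the proof in \cite{NV1} does not proceed by this vertex-splitting induction; it goes through Fr\"oberg-type characterizations and Koszul-homological machinery specific to that paper. Absent a concrete argument producing your vertex $x$, the upper bound remains open in your write-up.
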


We now use it to compute the depth of the edge ideals of whisker graphs of complete graphs.

\begin{lem}\label{lem_depth_whisker} Let $\a = (a_1,\ldots,a_n) \in \NN^n$ and $W_\a$ be the graph obtained by gluing $a_i$ leaves to the vertex $i$ of a complete graph $K_n$. Assume that $a_1 \ge \ldots \ge a_k > 0 = a_{k+1} = \cdots = a_n$. Then 
$$\depth (S/I(W_\a)) = 1 + a_2 + \cdots + a_k.$$    
\end{lem}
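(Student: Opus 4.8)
The plan is to compute $\pd(S/I(W_\a))$ via Theorem \ref{thm_pd_weakly_chordal} and then use the Auslander–Buchsbaum formula $\depth(S/I(W_\a)) = N - \pd(S/I(W_\a))$, where $N = n + a_1 + \cdots + a_n$ is the total number of vertices of $W_\a$. First I would check that $W_\a$ is weakly chordal: adding pendant edges (leaves) to a complete graph $K_n$ cannot create an induced cycle of length $\ge 5$ (any such cycle would have to use leaves, but leaves have degree one and cannot lie on a cycle), and one checks that the complement of $W_\a$ is also free of long induced cycles — this is the kind of routine verification I would spell out but not belabor. With weak chordality in hand, the whole problem reduces to computing the combinatorial invariant $d(W_\a) = \max\left(\sum_{i=1}^g |V(B_i)| - g\right)$ over strongly disjoint families of complete bipartite subgraphs.

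The key computation is to determine the optimal strongly disjoint family. The natural candidate: for each vertex $i$ with $a_i > 0$, take the bouquet $B_i$ consisting of vertex $i$ together with all $a_i$ leaves attached to it, contributing a leaf edge $e_i$; since the leaf edges at distinct vertices form an induced matching (the leaves are only adjacent to their stems, and the stems $1,\dots,k$ of $K_n$ are mutually adjacent so no two stems can both appear unless we are careful — actually the induced matching condition fails if we pick two full bouquets $B_i, B_j$ because the stems $i,j$ are adjacent in $K_n$, making $e_i, e_j$ not an induced matching). So I would instead argue as follows: at most one of the bipartite subgraphs in the family can be a clique-edge or a bouquet whose stem is a $K_n$-vertex with a neighboring stem also used; the induced matching constraint forces that among the vertices $1,\dots,k$, the edges $e_i$ chosen must be pairwise non-adjacent, hence at most one edge $e_i$ can be incident to a vertex of $K_n$ unless its other endpoint is a leaf and... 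I need to think carefully: if $e_i$ and $e_j$ are both leaf edges with stems $i \neq j$ in $K_n$, then $i$ and $j$ are adjacent in $W_\a$, so $\{e_i, e_j\}$ is NOT an induced matching. Hence \emph{at most one} of the chosen edges can be a leaf edge at a $K_n$-vertex — wait, that would kill the bound. The resolution: we are allowed to take $B_i$ to be a bouquet not containing the stem vertex as the "center on the $K_n$ side" — rather, reverse it. Take for vertex $i$ the bouquet with center $i$ and leaves as the other side; the representative edge $e_i$ goes from $i$ to one leaf. For $e_i, e_j$ to form an induced matching we need no edge between $\{i, \ell_i\}$ and $\{j, \ell_j\}$; but $ij \in E(W_\a)$. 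So this does not work for two stems at once. Therefore I would use a \emph{different} family: pick one vertex, say vertex $1$, and for it take the large star $B_1$ = bouquet on $\{1\} \cup \{$leaves of $1\} \cup \{2, \ldots, n\}$? No — that is not complete bipartite. Let me reconsider: the correct optimal family, giving the stated answer $1 + a_2 + \cdots + a_k$, should have $\pd = N - (1 + a_2 + \cdots + a_k) = n + a_1 - 1$. So $d(W_\a) = n + a_1 - 1$.

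To realize $d(W_\a) = n + a_1 - 1$, I would take the single complete bipartite subgraph $B_1$ with parts $\{1\}$ and $N_{W_\a}(1) = \{2, 3, \ldots, n\} \cup \{a_1 \text{ leaves of } 1\}$, which is a bouquet (star) on $n + a_1$ vertices, contributing $|V(B_1)| - 1 = n + a_1 - 1$. One then must verify no strongly disjoint family does better: the key lemma is that for a graph $G$, $d(G) \le \pd(S/I(G)) \le $ (bigheight or similar) and more usefully that $d(G)$ is bounded by reasoning on the induced matching — each $B_i$ contributes its representative edge $e_i$, these form an induced matching, and the $B_i$ are vertex-disjoint; I would bound $\sum |V(B_i)| - g$ by analyzing that the leaves contribute at most $a_1 + \cdots + a_k$ total vertices and that an induced matching in $W_\a$ using $t$ of the stems must "waste" the adjacencies among those $t$ stems, forcing $g \le$ something. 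The cleanest route: show directly that $\depth(S/I(W_\a)) \ge 1 + a_2 + \cdots + a_k$ by the variable-clearing / colon arguments of Lemma \ref{lem_depth_colon} and Lemma \ref{lem_depth_variables_clearing} (induct on $n$, splitting on a leaf edge $x_1 y$ where $y$ is a leaf at vertex $1$, using $(I : x_1) $ contains all $x_j$ for $j \sim 1$ and $(I, x_1)$ gives $I(W_{\a'})$ on a smaller whisker graph), and show $\depth \le 1 + a_2 + \cdots + a_k$ by Lemma \ref{lem_upperbound}, colonizing by the monomial $\prod_{i=1}^{k} y_i^{(1)}$ (one leaf per nonzero stem) to reach an ideal whose depth is manifestly $1 + a_2 + \cdots + a_k$ via Lemma \ref{lem_depth_variables_clearing}. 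The main obstacle will be pinning down the exact optimal family (or the exact colon monomial) and proving its optimality — i.e., the matching upper-bound argument on $d(W_\a)$, since the interplay between the induced-matching requirement and the vertex-disjointness is precisely where the "$a_1$ drops out but $a_2, \ldots, a_k$ survive" phenomenon is encoded; everything else (weak chordality, Auslander–Buchsbaum bookkeeping) is routine.
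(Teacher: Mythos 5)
Your overall route is the paper's: verify $W_\a$ is (weakly) chordal, compute $d(W_\a)$ via Theorem \ref{thm_pd_weakly_chordal}, and finish with Auslander--Buchsbaum. You also land on the correct extremal configuration --- the single star with parts $\{x_1\}$ and $N_{W_\a}(x_1)$, giving $d(W_\a) \ge n + a_1 - 1$ --- and the correct target value. What is missing is the matching upper bound $d(W_\a) \le n + a_1 - 1$: you explicitly flag ``proving its optimality'' as the main obstacle and leave it open, so as written the argument is incomplete at precisely the step that carries the content of the lemma.

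The paper closes this with two short observations that your analysis circles but does not quite reach. First, \emph{every} edge of $W_\a$ contains a vertex of $K_n$, and any two vertices of $K_n$ are adjacent; hence for any two edges $e_1, e_2$ one has $N_{W_\a}[e_1] \cap e_2 \neq \emptyset$, i.e., the induced matching number of $W_\a$ is $1$. This forces $g = 1$ in every strongly disjoint family, so $d(W_\a) = \max\{|V(B)| - 1\}$ over single complete bipartite subgraphs $B$, and the multi-bouquet configurations you were wrestling with simply cannot occur. Second, if $B = K_{U_1,U_2}$ contains a leaf $y_{i,j} \in U_1$, then $U_2 \subseteq N_{W_\a}(y_{i,j}) = \{x_i\}$, so $B$ is a star centered at $x_i$ and $V(B) \subseteq \{x_i\} \cup N_{W_\a}(x_i)$, whence $|V(B)| \le n + a_i \le n + a_1$; if $B$ meets no leaf then $|V(B)| \le n$. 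Together these give $d(W_\a) = n + a_1 - 1$, so $\pd(S/I(W_\a)) = n + a_1 - 1$ and $\depth(S/I(W_\a)) = (n + a_1 + \cdots + a_k) - (n + a_1 - 1) = 1 + a_2 + \cdots + a_k$. Your fallback suggestion of a direct colon/induction argument is plausible but unnecessary once the induced-matching observation is in place.
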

\begin{proof}
    From the definition of $W_\a$, it is clear that $W_\a$ is a chordal graph. For simplicity of notation, we assume that 
    \begin{align*}
        V(G) &= \{x_1,\ldots,x_n\} \cup \{y_{i,j} \mid i = 1, \ldots,k, j = 1, \ldots,a_i\} \\
        E(G) &= \{\{x_i, x_j\} \mid i\neq j \in [n]\} \cup \{ \{x_i, y_{i,j}\} \mid i = 1,\ldots,k, j = 1,\ldots,a_i\}.
    \end{align*}
   For any edges $e_1,e_2$ of $W_\a$, we have $N_{W_\a}[e_1] \cap e_2 \neq \emptyset$. Hence, the induced matching number of $W_\a$ is $1$. Now, let $B$ be a complete bipartite subgraph of $W_\a$ with bipartition $V(B) = U_1 \cup U_2$. Let $X = \{x_1,\ldots,x_n\}$ and $Y = \{y_{i,j} \mid i = 1,\ldots,k,j = 1,\ldots,a_i\}$. If $V(B) \cap Y = \emptyset$ then $|V(B)| \le n$. Now, assume that $y_{i,j} \in U_1$ for some $i,j$. Then $x_i \in U_2$ and $y_{k,l} \notin V(B)$ for any $k \neq i$ since $B$ is a complete bipartite graph. Hence, $|V(B)| \le n + a_i$. Therefore, for any complete bipartite subgraph $B$ of $W_\a$, we have 
   $$|V(B) | \le n + \max \{a_i \mid i = 1, \ldots,n\} = n + a_1.$$
   Furthermore, let $U_1 = \{x_1\}$, $U_2 = \{x_2,\ldots,x_n,y_{1,1},\ldots,y_{1,a_1} \}$ and $B = K_{U_1,U_2}$ then $B$ is a complete bipartite subgraph of $W_\a$ with $|V(B)| = n + a_1$. By Theorem \ref{thm_pd_weakly_chordal}, we deduce that 
   $$\pd(S/I(W_\a)) = n + a_1 -1.$$
   The conclusion follows from the Auslander-Buchsbaum formula.
\end{proof}

\subsection{Symbolic powers of edge ideals}
Let $I$ be a squarefree monomial ideal in $S$ with the irreducible decomposition
$$I = \p_1 \cap \cdots \cap \p_m.$$
The $s$-th symbolic power of $I$ is defined by 
$$I^{(s)} = \p_1^s \cap \cdots \cap \p_m^s.$$

By the proof of \cite[Theorem 5.2]{KTY}, we have 
\begin{lem}\label{lem_colon_leaf} Assume that $e$ is a leaf edge of $G$. Then for all $s \ge 2$ we have $I(G)^{(s)} : e = I(G)^{(s-1)}$. In particular, $\depth S/I(G)^{(s)}$ is a non-increasing function.    
\end{lem}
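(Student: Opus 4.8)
The plan is to work with the explicit primary decomposition of the symbolic power and reduce everything to a counting statement about minimal vertex covers. Write $e = x_u x_v$, where $v$ is the leaf and $u$ is its unique neighbour. Recall that the minimal primes $\p_1, \ldots, \p_m$ of $I(G)$ are precisely the ideals $\p_C = (x_i \mid i \in C)$ as $C$ ranges over the minimal vertex covers of $G$, so that $I(G)^{(s)} = \bigcap_C \p_C^{\,s}$. Since forming a colon by a fixed element commutes with intersection, i.e.\ $\left(\bigcap_C \p_C^{\,s}\right) : e = \bigcap_C \left(\p_C^{\,s} : e\right)$, it suffices to compute each $\p_C^{\,s} : e$ and to recognize the resulting intersection as $I(G)^{(s-1)} = \bigcap_C \p_C^{\,s-1}$.

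First I would record the colon formula for powers of a monomial prime. For $\p_C = (x_i \mid i \in C)$ and a monomial $m$, set $\deg_C(m) = \sum_{i \in C} \deg_{x_i}(m)$; then a monomial $g$ lies in $\p_C^{\,s} : e$ if and only if $\deg_C(ge) \ge s$, so that $\p_C^{\,s} : e = \p_C^{\,s - \deg_C(e)}$ whenever $\deg_C(e) \le s$ (with the convention $\p_C^{0} = S$). Thus the entire computation hinges on the value of $\deg_C(e) = |C \cap \{u,v\}|$.

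The key combinatorial observation, and the one step that genuinely uses that $e$ is a leaf edge, is that every minimal vertex cover $C$ of $G$ satisfies $|C \cap \{u,v\}| = 1$. Indeed, $C$ must meet the edge $\{u,v\}$, so $|C \cap \{u,v\}| \ge 1$; and it cannot contain both endpoints, because the only edge incident to the leaf $v$ is $\{u,v\}$, which is already covered by $u$, whence if $u, v \in C$ then $C \setminus \{v\}$ is still a vertex cover, contradicting minimality. Feeding $\deg_C(e) = 1$ into the colon formula gives $\p_C^{\,s} : e = \p_C^{\,s-1}$ for every $C$ (here $s \ge 2$ guarantees $s-1 \ge 1$), and intersecting over all $C$ yields $I(G)^{(s)} : e = I(G)^{(s-1)}$.

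For the final assertion, since $\deg_C(e) = 1 < s$ for every $C$ when $s \ge 2$, the monomial $e$ does not lie in $I(G)^{(s)}$; Lemma \ref{lem_upperbound} then gives $\depth S/I(G)^{(s)} \le \depth S/(I(G)^{(s)} : e) = \depth S/I(G)^{(s-1)}$, so the function $s \mapsto \depth S/I(G)^{(s)}$ is non-increasing. I do not anticipate a serious obstacle here: the only point requiring care is the combinatorial claim about minimal vertex covers, which is immediate from the leaf property, together with the bookkeeping that $\deg_C(e)$ is constantly $1$ across all covers. This is exactly what would fail for a non-leaf edge, where a minimal cover could contain both endpoints and force $\deg_C(e) = 2$, breaking the clean identification of the intersection with $I(G)^{(s-1)}$.
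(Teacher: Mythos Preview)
Your argument is correct: the reduction to individual primary components via $\bigl(\bigcap_C \p_C^{\,s}\bigr):e = \bigcap_C (\p_C^{\,s}:e)$, the colon formula $\p_C^{\,s}:e = \p_C^{\,s-\deg_C(e)}$, and the combinatorial fact that a minimal vertex cover meets a leaf edge in exactly one vertex are all valid, and together they give $I(G)^{(s)}:e = I(G)^{(s-1)}$; the monotonicity then follows from Lemma~\ref{lem_upperbound} exactly as you say.

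As for comparison, the paper does not supply its own argument for this lemma at all---it simply invokes the proof of \cite[Theorem~5.2]{KTY}. Your write-up is therefore a self-contained replacement for that external citation. The method you use (intersect primary components, compute colons componentwise, and exploit that each minimal cover hits the leaf edge exactly once) is the natural elementary route and is almost certainly what underlies the cited result as well; there is no substantive divergence in strategy, only in explicitness.
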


We also have the following simple result that will be used later.

\begin{lem}\label{lem_colon_complete_graphs} Assume that $n \ge 2$ be an integer. Let $K_n$ be the complete graph on $n$ vertices. Then 
$I(K_n)^{(n)} : (x_1\cdots x_n)  = I (K_n).$
\end{lem}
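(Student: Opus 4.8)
The plan is to reduce the claim to a purely numerical statement about exponent vectors, using the irreducible decomposition of $I(K_n)$. Since the minimal vertex covers of $K_n$ are exactly the sets $[n]\setminus\{i\}$ for $i \in [n]$, we have
$$I(K_n) = \p_1 \cap \cdots \cap \p_n, \qquad \p_i = (x_j \mid j \in [n],\ j \ne i),$$
and hence $I(K_n)^{(n)} = \p_1^{\,n} \cap \cdots \cap \p_n^{\,n}$. Writing $\x^{\a} = x_1^{a_1}\cdots x_n^{a_n}$ and $|\a| = a_1 + \cdots + a_n$, the first step is the elementary observation that $\x^{\a} \in \p_i^{\,n}$ if and only if $|\a| - a_i \ge n$ (the exponent vector must have total degree at least $n$ among the variables other than $x_i$). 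Intersecting over $i$, this gives the clean criterion
$$\x^{\a} \in I(K_n)^{(n)} \iff |\a| - \max_{1 \le i \le n} a_i \ge n.$$

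Next I would feed in the colon. A monomial $\x^{\a}$ lies in $I(K_n)^{(n)} : (x_1\cdots x_n)$ precisely when $\x^{\a}\cdot x_1\cdots x_n \in I(K_n)^{(n)}$, i.e. when the exponent vector $\a + (1,\ldots,1)$ satisfies the criterion above. Since replacing $\a$ by $\a + (1,\ldots,1)$ increases $|\a|$ by $n$ and increases $\max_i a_i$ by exactly $1$, the condition $|\a + (1,\ldots,1)| - \max_i (a_i+1) \ge n$ simplifies to $|\a| - \max_i a_i \ge 1$. Finally, $|\a| - \max_i a_i \ge 1$ holds if and only if $\a$ has at least two positive coordinates, i.e. $\x^{\a}$ is divisible by $x_i x_j$ for some $i \ne j$; as each such $x_i x_j$ generates $I(K_n)$, this is equivalent to $\x^{\a} \in I(K_n)$. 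Comparing the two membership criteria, and using that a monomial ideal is determined by the monomials it contains, yields $I(K_n)^{(n)} : (x_1\cdots x_n) = I(K_n)$.

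There is no serious obstacle here; the only point that needs a moment's care is the bookkeeping in the degree shift $\a \mapsto \a + (1,\ldots,1)$ — in particular, the elementary fact that the largest coordinate goes up by exactly one — which makes the exponent $n$ on the left and the all-ones shift cancel, leaving exactly the defining inequality of $I(K_n)$.
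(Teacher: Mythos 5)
Your argument is correct and follows essentially the same route as the paper: both start from the irreducible decomposition $I(K_n)^{(n)} = \p_1^{\,n} \cap \cdots \cap \p_n^{\,n}$ and compute the colon componentwise, the paper by noting $\p_i^{\,n} : (x_1\cdots x_n) = \p_i$ directly and you by the equivalent exponent-vector criterion. No gaps.
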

\begin{proof} For each $i = 1, \ldots, n$, let $\p_i = (x_1,\ldots,x_{i-1},x_{i+1},\ldots,x_n)$. Then, $I(K_n) = \p_1 \cap \cdots \cap \p_n$. Since $x_i \notin \p_i$, we deduce that $\p_i^n : (x_1\cdots x_n) = \p_i$. Hence, 
\begin{align*}
    I(K_n)^{(n)} : (x_1 \cdots x_n) &= (\p_1^n \cap \cdots \cap \p_n^n) : (x_1 \cdots x_n) \\
    & = \left ( \p_1^n : (x_1 \cdots x_n) \right ) \cap \cdots \cap \left (\p_n^n: (x_1 \cdots x_n) \right ) \\
    & = \p_1 \cap \cdots \cap \p_n = I(K_n).    
\end{align*}
The conclusion follows.
\end{proof}

\section{Stable value of depth of symbolic powers of edge ideals}\label{sec_bound_stable_value}
In this section, we prove that the stable value of depth of symbolic powers of edge ideals is at most the bipartite connectivity number of $G$. We assume that $S = \k[x_1, \ldots, x_n]$ and $G$ is a simple graph on $V(G) = \{1,\ldots,n\}$. For an exponent $\a = (a_1, \ldots, a_n) \in \NN^n$, we set $x^\a = x_1^{a_1} \cdots x_n^{a_n}$ and $|\a| = a_1 + \cdots + a_n$.

We first introduce some notation. Let $H$ be a connected bipartite graph with the partition $V(H) = X \cup Y$. The bipartite completion of $H$, denoted by $\tilde H$ is the complete bipartite graph $K_{X,Y}$. Now, assume that $H = H_1 \cup \cdots \cup H_c \cup \{p_1, \ldots, p_t\}$ where $H_1, \ldots, H_c$ are connected components of $H$ with at least one edge, and $p_1, \ldots, p_t$ are isolated points of $H$. Then the bipartite completion of $H$ is defined by $\tilde H = \tilde H_1 \cup \cdots \cup \tilde H_c \cup \{p_1, \ldots, p_t\}$. We have 

\begin{lem}\label{lem_colon_bipartite} Let $H$ be a bipartite graph. Let $\a = \d(H) = (d_H(1), \ldots, d_H(n)) \in \NN^n$ and $s = |\a| / 2$. Then 
$$\sqrt{I(H)^{s+1} : x^\a} = I({\tilde H}),$$
where $\tilde H$ is the bipartite completion of $H$.     
\end{lem}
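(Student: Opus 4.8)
The plan is to reduce everything to the primary decomposition of a symbolic power together with a purely combinatorial identification of its relevant components. Write $\p_C = (x_i \mid i \in C)$ for a subset $C \subseteq V(H)$. Since the minimal primes of $I(H)$ are exactly the ideals $\p_C$ with $C$ a minimal vertex cover of $H$, the definition of symbolic power gives $I(H)^{(s+1)} = \bigcap_C \p_C^{s+1}$, the intersection running over all minimal vertex covers $C$ of $H$. Because $H$ is bipartite, I will invoke the classical theorem of Simis, Vasconcelos and Villarreal that edge ideals of bipartite graphs are normally torsion-free, so that $I(H)^{s+1} = I(H)^{(s+1)}$. This replaces the ordinary power in the statement by the symbolic one, whose decomposition is transparent, and it is precisely this substitution that makes the radical computation feasible.

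First I would compute the radical of the colon of each prime power. For the monomial prime $\p_C$ and the monomial $x^{\a}$ with $\a = \d(H)$, set $a_C := \sum_{i \in C} d_H(i)$. A direct degree count gives $\p_C^{s+1} : x^{\a} = \p_C^{\,s+1-a_C}$ when $a_C \le s$ and $\p_C^{s+1} : x^{\a} = S$ when $a_C \ge s+1$. Taking radicals, and using that radical commutes with finite intersection and with the colon, I obtain
\[
\sqrt{I(H)^{(s+1)} : x^{\a}} \;=\; \bigcap_{C \,:\, a_C \le s} \p_C ,
\]
the intersection now running over the minimal vertex covers $C$ of $H$ with $a_C \le s$. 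The problem is thereby reduced to showing that this family coincides with the family of minimal vertex covers of $\tilde H$, since $I(\tilde H) = \bigcap_D \p_D$ over the latter and both decompositions are irredundant (minimal vertex covers form an antichain, so equality of ideals is equivalent to equality of the families).

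The combinatorial heart of the argument, which I expect to be the main obstacle, is this identification. Writing $a_C = \sum_{\text{non-leaf } e} |e \cap C|$ and using that $C$ covers every non-leaf edge, we get $a_C \ge s$ (there being $s$ non-leaf edges), so the constraint $a_C \le s$ forces $|e \cap C| = 1$ for every non-leaf edge $e$. Minimality then handles the leaf edges: if a leaf $\ell$ and its unique neighbour both lay in $C$, then $\ell$ could be deleted, contradicting minimality, so each leaf edge is also met in exactly one endpoint. Hence the indicator of $C$ is a proper two-colouring of each connected component $H_i$; as $H_i$ is connected and bipartite with parts $X_i, Y_i$, this forces $C \cap V(H_i) \in \{X_i, Y_i\}$, and since minimality excludes the isolated points of $H$, we obtain $C = \bigcup_i Z_i$ with $Z_i \in \{X_i, Y_i\}$ — exactly the minimal vertex covers of $\tilde H = \bigcup_i K_{X_i,Y_i} \cup \{p_1,\dots,p_t\}$. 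The converse inclusion is the easy verification that each such $\bigcup_i Z_i$ is a minimal vertex cover of $H$ of weight $a_C = \sum_i s_i = s$, where $s_i$ is the number of non-leaf edges of $H_i$. This yields $\sqrt{I(H)^{(s+1)} : x^{\a}} = I(\tilde H)$, and by the normal torsion-freeness of the bipartite edge ideal the identical equality holds with $I(H)^{s+1}$ in place of $I(H)^{(s+1)}$, completing the proof.
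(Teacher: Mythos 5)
Your proof is correct, but it takes a genuinely different route from the paper's. The paper reduces to the case of a connected bipartite graph (by factoring a monomial in the radical according to the components of $H$) and then quotes two earlier results, Lemma 3.1 of [T] and Lemma 2.19 of [MNPTV], which handle the colon of ordinary powers of a connected bipartite edge ideal. You instead invoke the Simis--Vasconcelos--Villarreal theorem that bipartite edge ideals are normally torsion-free to replace $I(H)^{s+1}$ by $I(H)^{(s+1)} = \bigcap_C \p_C^{s+1}$, distribute the colon and then the radical over this finite intersection, and reduce the lemma to the combinatorial claim that the minimal vertex covers $C$ of $H$ with $a_C = \sum_{i\in C} d_H(i) \le s$ are exactly the minimal vertex covers of $\tilde H$. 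Your verification of that claim is sound: $a_C = \sum_{e \text{ non-leaf}} |e\cap C| \ge s$ with equality iff every non-leaf edge is met exactly once, minimality forces the same for leaf edges and excludes isolated vertices, and connectivity of each $H_i$ then pins $C\cap V(H_i)$ down to one side of the (unique) bipartition. What your approach buys is a self-contained argument that treats disconnected $H$ uniformly and makes the appearance of the bipartite completion transparent (its minimal vertex covers are precisely the covers of $H$ of minimal weight $a_C$); what it costs is the reliance on normal torsion-freeness, an external ingredient of weight comparable to the two lemmas the paper cites. One small caution on wording: ``radical commutes with the colon'' is not a valid general principle ($\sqrt{I:f}$ need not equal $\sqrt{I}:f$); what you actually use, and what is correct, is that the colon distributes over the finite intersection of the $\p_C^{s+1}$ and that the radical distributes over the resulting finite intersection of explicitly computed prime powers.
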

\begin{proof} Since variables corresponding to isolated points do not appear in $I(H)$, we may assume that $H$ does not have isolated points. Assume that $H = H_1 \cup \cdots \cup H_c$ where $H_i$ are connected components of $H$ with at least one edge. Let $\a_i = \d(H_i)$. Note that $x^{\a_i}$ is equal to the product of non-leaf edges of $H_i$, hence $|\a_i|$ is even for all $i$. Let $s_i = |\a_i| / 2$. Now assume that $f \in \sqrt{I(H)^{s+1}: x^\a}$ with $f = f_1 \cdots f_c$ and $\supp f_i \subseteq V(H_i)$. Then we have $f^m x^\a \in I(H)^{s+1}$ for some $m > 0$. Thus, we must have $f_i^m x^{\a_i} \in I(H_i)^{s_i + 1}$ for some $i$. Hence, we may assume that $H$ is connected. The conclusion then follows from \cite[Lemma 3.1]{T} and \cite[Lemma 2.19]{MNPTV}.
\end{proof}

Now, let $H$ be a maximal induced bipartite subgraph of $G$, i.e., for any $v\in V(G) \setminus V(H)$ the induced subgraph of $G$ on $V(H) \cup \{v\}$ is not bipartite. In particular, $H$ contains at least one edge. Assume that $H = H_1 \cup \cdots \cup H_c \cup \{p_1, \ldots, p_t\}$ where $H_i$ are connected components of $H$ with at least one edge and $p_1, \ldots, p_t$ are isolated points of $H$. Then $c(H) = c+ t$ is the number of connected components of $H$. We have
\begin{lem}\label{lem_upperbound_stable_value}
    Let $H$ be a maximal induced bipartite subgraph of $G$. Then 
        $$\depth (S/(I(G)^{(s)}) \le c(H),$$
        for all $s \ge |E(H)|+1$, where $c(H)$ is the number of connected components of $H$.
\end{lem}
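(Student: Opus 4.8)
The plan is to reduce the bound on $\depth(S/I(G)^{(s)})$ to a computation with a specific colon ideal. Set $\a = \d(H) = (d_H(1), \ldots, d_H(n))$, the vector recording for each vertex the number of non-leaf edges of $H$ incident to it, and let $s_0 = |\a|/2 = |E'(H)|$ where $E'(H)$ counts non-leaf edges; note $s_0 \le |E(H)|$. By Lemma \ref{lem_upperbound}, for any $s$ and any monomial $f$ with $f \notin I(G)^{(s)}$ we have $\depth(S/I(G)^{(s)}) \le \depth(S/(I(G)^{(s)} : f))$. I would take $f = x^{\a}$ (suitably interpreted; one checks $x^\a \notin I(G)^{(s)}$ as long as $s \ge s_0 + 1$, since $x^\a$ is a product of edges of the induced bipartite subgraph $H$ and cannot lie in too high a symbolic power). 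So it suffices to show $\depth(S/(I(G)^{(s)} : x^\a)) \le c(H)$ for $s \ge |E(H)| + 1$.

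The key point is to identify $\sqrt{I(G)^{(s)} : x^\a}$, or at least enough of its structure. Since $H$ is an \emph{induced} subgraph of $G$, the edge ideal $I(G)$ restricted to the variables of $V(H)$ is exactly $I(H)$. I expect the colon to behave like the bipartite case: modulo variables outside $V(H)$, one should get that $\sqrt{I(G)^{(s)} : x^\a}$ contains $I(\tilde H)$ — the bipartite completion — on the variables of $V(H)$, by an argument parallel to Lemma \ref{lem_colon_bipartite}, together with whatever generators come from vertices outside $H$. Crucially, because $H$ is a \emph{maximal} induced bipartite subgraph, for every $v \in V(G) \setminus V(H)$ the induced graph on $V(H) \cup \{v\}$ has an odd cycle through $v$; this should force, after passing to the radical of the colon, the variable $x_v$ itself (or a short monomial) into the ideal, so that $\sqrt{I(G)^{(s)} : x^\a} = I(\tilde H) + (x_v : v \notin V(H))$ as an ideal of $S$. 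Then by Lemma \ref{lem_depth_variables_clearing}(2) and the fact that depth is unchanged passing to the radical is \emph{not} true in general — so instead I would bound $\depth(S/(I(G)^{(s)} : x^\a)) \le \depth(S/\sqrt{I(G)^{(s)} : x^\a})$ using Lemma \ref{lem_upperbound} again (iterating colons by the extra variables, or more simply: $\depth S/J \le \depth S/\sqrt J$ fails, but $J \subseteq \sqrt J$ and one can reach $\sqrt J$ by successive colons, each not increasing... actually the clean route is the Lyubeznik/polarization-free observation that for monomial $J$, $\depth S/J \le \depth S/(J:g)$ repeatedly). Granting this, $S/(I(\tilde H) + (x_v : v \notin V(H)))$ has depth $= \depth(S'/I(\tilde H))$ where $S'$ is the polynomial ring on $V(H)$, and since $\tilde H = \tilde H_1 \cup \cdots \cup \tilde H_c \cup \{p_1, \ldots, p_t\}$ is a disjoint union of complete bipartite graphs and isolated points, Lemma \ref{lem_depth_variables_clearing}(1) gives $\depth(S'/I(\tilde H)) = \sum_{i=1}^c \depth(\k[\tilde H_i]/I(\tilde H_i)) + t = \sum_{i=1}^c 1 + t = c + t = c(H)$, using that a complete bipartite graph's Stanley–Reisner/edge ring has depth $1$.

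The main obstacle will be the second step: pinning down $\sqrt{I(G)^{(s)} : x^\a}$ precisely, in particular proving that the radical really does pick up all the outside variables $x_v$ and nothing extra that would lower the depth further (lowering is fine for an upper bound, but I need to be sure the argument is valid, i.e.\ that $x_v \in \sqrt{I(G)^{(s)}:x^\a}$). This requires exploiting maximality of $H$ carefully: given $v \notin V(H)$, the odd cycle through $v$ in $G[V(H) \cup \{v\}]$ gives a relation letting a power of $x_v$ times $x^\a$ land in a high enough symbolic power of $I(G)$, for which I would adapt the combinatorial bookkeeping from \cite{T} and \cite{MNPTV} cited in Lemma \ref{lem_colon_bipartite}, and the hypothesis $s \ge |E(H)| + 1$ is presumably exactly what is needed to have enough room in the exponent. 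A secondary technical point is the passage $\depth S/J \le \depth S/\sqrt J$ for monomial ideals, which I would handle by writing $\sqrt J$ as obtained from $J$ by a finite sequence of colon operations $J \rightsquigarrow J : g$ and invoking Lemma \ref{lem_upperbound} at each step — or, if that is awkward, by working with the Stanley–Reisner complex $\Delta(I(G)^{(s)} : x^\a)$ directly via Lemma \ref{lem_depth_formula} and Theorem \ref{thm_nerve}, since the facet structure of that complex is governed by the associated primes of $I(G)^{(s)}$, which are the $\p_i$ with multiplicities, independent of radical.
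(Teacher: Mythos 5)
Your overall strategy matches the paper's: colon $I(G)^{(s)}$ by a monomial supported on $H$, identify the radical of the colon as $I(\tilde H)$ plus the variables outside $V(H)$ using maximality, and compute the depth of that quotient as $c+t=c(H)$. The genuine gap is your choice of monomial. You colon by $x^{\d(H)}$, the product of the \emph{non-leaf} edges of $H$, whereas the paper colons by the product of \emph{all} edges, $x^\a=x^{\d(H)}\cdot\prod(e\mid e\text{ a leaf edge of }H)$, precisely so that $|\a|/2=|E(H)|$ is calibrated to the exponent $|E(H)|+1$ in the statement. This calibration is not cosmetic. First, Lemma \ref{lem_colon_bipartite} identifies the radical only at the matched exponent $s_0+1$; at exponent $s\ge|E(H)|+1>s_0+1$ the colon $I(G)^{(s)}:x^{\d(H)}$ shrinks and its radical need no longer contain $I(\tilde H)$, so the depth bound is lost. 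In the common degenerate case where every edge of $H$ is a leaf edge (e.g.\ $H$ a disjoint union of single edges and isolated points, as happens already for $G=C_3$), you get $\d(H)=\mathbf 0$ and $f=1$, and your reduction is circular. Second, the proof that $x_v\in\sqrt{I(G)^{(s+1)}:x^\a}$ for $v\notin V(H)$ needs $x^\a$ to contain the product of all edges of the relevant component $H_1$: from an induced odd cycle $v,1,\ldots,2k$ one gets $x_vx_1\cdots x_{2k}\in I(G_1)^{(k+1)}$ and then writes $x^{\a_1}=(x_1\cdots x_{2k})\cdot h$ with $h$ the product of the remaining $|E(H_1)|-k$ edges, giving $x_vx^{\a_1}\in I(G_1)^{(|E(H_1)|+1)}$; the matching edges of that odd cycle may well be leaf edges of $H_1$, so the bookkeeping fails with your smaller monomial. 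A related omission: even with the right monomial the identification of the radical is only valid at $s=|E(H)|+1$; for larger $t$ the paper pads the monomial to $x^\a\cdot e^{t-s-1}$ for an edge $e$ of $H$, a step your uniform treatment of all $s\ge|E(H)|+1$ skips.

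Two smaller points. The inequality $\depth S/J\le\depth S/\sqrt J$ for monomial ideals, which you hesitate over, is a known theorem (Herzog--Takayama--Terai) and is exactly what the paper invokes; you need not manufacture it from colon operations (your proposed colon chain to $\sqrt J$ does not work in general). Also, for the upper bound you must determine $\sqrt{I(G)^{(s+1)}:x^\a}$ \emph{exactly}, as in Eq.~\eqref{eq_3_1}, not merely show it contains $I(\tilde H)+(x_v\mid v\notin V(H))$: a further quotient of $S/(I(\tilde H)+(x_v\mid v\notin V(H)))$ can have strictly larger depth, so ruling out extra generators in the radical is part of the argument, not an optional refinement. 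Your final computation $\depth(S'/I(\tilde H))=c+t=c(H)$ via Lemma \ref{lem_depth_variables_clearing} and the fact that the edge ring of a complete bipartite graph has depth $1$ is correct and is what the paper uses.
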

\begin{proof} Assume that $H = H_1 \cup \cdots \cup H_c \cup \{p_1, \ldots, p_t\}$ where $H_1, \ldots, H_c$ are connected components of $H$ with at least one edge and $p_1,\ldots, p_t$ are isolated points of $H$. Let $\b = \d(H)$ and $x^\a = x^\b \cdot \prod (e \mid e \text{ is a leaf edge of } H)$. Then $x^\a$ is the product of edges of $H$. Let $s = |\a| /2 = |E(H)|$. By \cite[Corollary 2.7]{MNPTV}, $x^\a \notin I(G)^{(s+1)}$. We claim that 
\begin{equation}\label{eq_3_1}
    \sqrt{I(G)^{(s+1)} : x^\a} = I(\tilde H ) + (x_j \mid j \in V(G) \setminus V(H)).
\end{equation}
By Lemma \ref{lem_colon_bipartite}, it suffices to prove that $x_j \in \sqrt{I(G)^{(s+1)}: x^\a}$ for all $j \in V(G) \setminus V(H)$. Since the induced subgraph of $G$ on $\{j\} \cup H$ is not bipartite, there must exist a connected component, say $H_1$ of $H$ such that the induced subgraph of $G$ on $V(H_1) \cup \{j\}$ has an odd cycle. Let $G_1$ be the induced subgraph of $G$ on $H_1 \cup \{j\}$. Let $j, 1,\ldots, 2k$ be an induced odd cycle in $G_1$. Then we have $x_jx_1\cdots x_{2k} \in I(G_1)^{(k+1)}$. Furthermore, $x_1 \cdots x_{2k} = \prod_{j=1}^k e_j$ is a product of $k$ edges of $H_1$. By the definition of $\a$, we have $x^{\a_1}$ equals the products of all edges of $H_1$. In other words, $x^{\a_1} = x_1 \cdots x_{2k} \cdot h$ with $h \in I(H_1)^{|E(H_1)| - k}.$  Hence, $x_j x^{\a_1} \in I(G_1)^{(s_1 + 1)}$ where $s_1 = |E(H_1)|$. Eq. \eqref{eq_3_1} follows.

By Lemma \ref{lem_upperbound} and Eq. \eqref{eq_3_1}, we deduce that 
$$\depth S/I(G)^{(s+1)} \le \depth S/(I(G)^{(s+1)} : x^\a) \le \depth S/\sqrt{I(G)^{(s+1)} : x^\a }  = c(H).$$
For any $t \ge s+1$, let $x^\c = x^\a \cdot e^{t-s-1}$ where $e$ is an arbitrary edge of $H$. Then we have $x^\c \notin I(G)^{(t)}$ and $\sqrt{I(G)^{(t)} : x^\c} \supseteq \sqrt{I(G)^{(s+1)} : x^\a}$. Hence, $\depth S/I(G)^{(t)} \le c(H)$ for all $t \ge s+1$. The conclusion follows.
\end{proof}

\begin{defn} Let $G$ be a simple graph. Denote by $\B(G)$ the set of all maximal induced bipartite subgraphs of $G$. The bipartite connectivity number of $G$ is defined by 
$$\bc(G) = \min \left \{ c(H) \mid H \in \B(G)   \right \}.$$  
\end{defn}

We are now ready for the proof of Theorem \ref{thm_lim_depth_symbolic_power}. 

\begin{proof}[Proof of Theorem \ref{thm_lim_depth_symbolic_power}]
    The conclusion follows immediately from the definition and Lemma \ref{lem_upperbound_stable_value}.
\end{proof}

We now prove Proposition \ref{prop_whisker_1} giving an example of connected graphs for which the above inequality is equality and that the limit depth of symbolic powers of $I(G)$ could be any positive number.

\begin{proof}[Proof of Proposition \ref{prop_whisker_1}] We may assume that 
\begin{align*}
    V(W_n) & = \{x_1, \ldots, x_n, y_1, \ldots, y_n\} \text{ and } \\
E(W_n) &= \{\{x_i,x_j\}, \{x_i, y_i\} \mid 1 \le i \neq j \le n\}.
\end{align*}

Let $H$ be a maximal bipartite subgraph of $W_n$. Then $y_1, \ldots, y_n \in H$ and $H$ contains at most two vertices in $\{x_1, \ldots, x_n\}$. By the maximality of $H$, we deduce that $H$ must be the induced subgraph of $W_n$ on $\{y_1, \ldots, y_n\} \cup \{x_i, x_j\}$ for some $i \neq j$. Hence, $c(H) = n-1$. Thus, $\bc(W_n) = n-1$.

By Lemma \ref{lem_colon_leaf}, $\depth S/I(W_n)^{(s)}$ is non-increasing. Furthermore, we have 
$$I(W_n)^{(2)} : (x_1x_2) = (x_1y_1,x_2y_2,x_1x_2,y_1y_2,x_3, \ldots, x_n).$$
Hence, $\depth S/I(W_n)^{(2)} \le n - 1$. 

    It remains to prove that $\depth S/I(W_n)^{(s)} \ge n-1$ for all $s \ge 2$. We prove by induction on $n$ and $s$ the following statement. Let $I_k = I(K_n) + (x_1y_1, \ldots, x_ky_k)$ and $S_k = \k[x_1, \ldots, x_n, y_1, \ldots, y_k]$. Then $\depth S_k/I_k^{(s)} \ge k - 1$ for all $2 \le k \le n$ and all $s \ge 1$. 

    Note that $I_k = I(G_k)$ where $G_k = K_n \cup \{ \{ x_i,y_i\} \mid i = 1, \ldots, k\}$. By Lemma \ref{lem_depth_whisker}, $\depth S_k/I_k = k$.

    Since $\m_k$, the maximal homogeneous ideal of $S_k$, is not an associated prime of $I_k$, $\depth S_k / I_k^{(s)} \ge 1$ for all $k$. Thus, we may assume that $s \ge 2$ and $n \ge k \ge 3$. By Lemma \ref{lem_depth_colon},  
$$ \depth S_k/I_k^{(s)} \in \{ \depth (S_k/(I_k^{(s)},x_ky_k)), \depth (S_k/I_k^{(s)}:x_ky_k)\}.$$
By Lemma \ref{lem_colon_leaf}, $I_k^{(s)} : x_k y_k = I_k^{(s-1)}$. Thus, by induction, it suffices to prove that 
$$\depth S_k/(I_k^{(s)},x_ky_k) \ge k - 1.$$
We have $J = (I_k^{(s)},x_k y_k) = (J,x_k) \cap (J,y_k)$. The conclusion follows from induction on $k$ and Lemma \ref{lem_short_exact_seq}.    
\end{proof}

The inequality in Theorem \ref{thm_lim_depth_symbolic_power} might be strict. We will now define a finer invariant of $G$ which we conjecture to be equal to the stable value of depth of symbolic powers of $I(G)$. Let $H = H_1 \cup \cdots \cup H_c \cup \{p_1, \ldots, p_t\}$ be a maximal induced bipartite subgraph of $G$ where $H_1, \ldots, H_c$ are connected components of $H$ with at least one edge and $p_1, \ldots, p_t$ are isolated points. We say that $\{p_{i_1}, \ldots, p_{i_u}\}$ are clustered if there exists $v \in V(G) \setminus V(H)$ such that the induced subgraph of $G$ on $\{v, p_{i_1}, \ldots, p_{i_u}\}$ is a bouquet. Let $\bou_G(H)$ be the smallest number $b$ such that the set $\{p_1, \ldots, p_t\}$ can be clustered into $b$ bouquets in $G$. We call $c'(H) = c + \bou_G(H)$ the number of restricted connected components of $H$. 

\begin{defn} Let $G$ be a simple graph. The restricted bipartite connectivity number of $G$ is defined by
$$\bc'(G) = \min \{c'(H)\mid H \in \B(G)\}.$$    
\end{defn}

We need a preparation lemma to prove Theorem \ref{thm_whisker_complete_graph}.

\begin{lem}\label{lem_colon_whisker} Let $\a = (a_1,\ldots,a_n) \in \NN^n$ be such that $a_i \ge 1$ for all $i = 1, \ldots, n$. Let $W_\a$ be a graph whose vertex set and edge set are 
\begin{align*}
    V(W_\a) &= \{x_1, \ldots,x_n, y_{1,1}, \ldots, y_{1,a_1}, \ldots, y_{n,1}, \ldots, y_{n,a_n} \},\\
    E(W_\a) &= \{ \{x_i, x_j\}, \{x_i, y_{i,\ell}\} \mid \text{ for all } i, j, \ell \text{ such that } 1 \le i \neq j \le n, 1 \le \ell \le a_i \}.
\end{align*}
Then 
$$I(W_\a)^{(n)} : (x_1 \cdots x_n) = I(W_\a) + (y_{1,1},\ldots,y_{1,a_1}) (y_{2,1}, \ldots, y_{2,a_2}) \cdots (y_{n,1}, \ldots, y_{n,a_n}).$$
\end{lem}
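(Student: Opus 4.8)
The plan is to compute the colon ideal on both sides by intersecting over the minimal primes of $I(W_\a)$ raised to the $n$-th power, reducing the statement to a calculation on the complete graph $K_n$ together with a leaf-edge bookkeeping step. Recall that $I(W_\a) = \bigcap_{i=1}^n \p_i \cap \bigcap_{i,\ell} \q_{i,\ell}$, where $\p_i = (x_1,\ldots,x_{i-1},x_{i+1},\ldots,x_n, y_{i,1},\ldots,y_{i,a_i})$ comes from the minimal vertex covers that discard $x_i$ (and hence must contain all whiskers $y_{i,\ell}$ at $x_i$), and $\q_{i,\ell} = (x_i)$ would handle the leaf edge $\{x_i,y_{i,\ell}\}$ — but since $x_i \in \p_j$ for every $j \ne i$, the prime $(x_i)$ is not minimal; the actual minimal primes are exactly the $\p_i$. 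So $I(W_\a)^{(n)} = \bigcap_{i=1}^n \p_i^n$.

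First I would show $\p_i^n : (x_1\cdots x_n) = \p_i + (y_{i,1},\ldots,y_{i,a_i})$ — no, more carefully: $x_j \in \p_i$ for $j \ne i$, and $x_i \notin \p_i$, so dividing $\p_i^n$ by $x_1\cdots x_n$ removes the $n-1$ factors already in $\p_i$ and one factor of $x_i$, giving $\p_i^n : (x_1\cdots x_n) = \p_i^{n-1}:(x_i) \cap \cdots$; the cleanest route is to note that modulo $x_i$, $\p_i$ is generated by $n-1+a_i$ variables none equal to $x_i$, and a standard computation (as in Lemma~\ref{lem_colon_complete_graphs}) gives $\p_i^n : (x_1\cdots x_n) = \p_i$, since $x_1\cdots x_n$ contributes exactly $n-1$ of the needed $n$ factors from $\p_i$ plus the extra variable $x_i \notin \p_i$ which cannot help. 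Wait — that would give degree drop $n-1$, not $n$, so $\p_i^n : (x_1\cdots x_n) = \p_i^n : (x_1\cdots\widehat{x_i}\cdots x_n) = \p_i$. Then $I(W_\a)^{(n)} : (x_1\cdots x_n) = \bigcap_i \p_i$.

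The key step is then to identify $\bigcap_{i=1}^n \p_i$ as the right-hand side. Since $\p_i \supseteq (x_j \mid j\ne i)$, the intersection $\bigcap_i \p_i$ contains every squarefree quadratic $x_j x_k$ with $j \ne k$ (it lies in every $\p_i$ because at least one of $j,k$ differs from $i$), and it contains $x_i y_{i,\ell}$ (in $\p_i$ via $y_{i,\ell}$, in $\p_j$ for $j\ne i$ via $x_i$); these generate $I(W_\a)$. It remains to see which monomials in the $y$'s survive: a monomial $y_{1,\ell_1}^{b_1}\cdots$ supported only on whiskers lies in $\p_i$ iff it involves some $y_{i,\ell}$, so $\bigcap_i \p_i$ contains exactly the $y$-monomials hitting every "whisker block," i.e. the ideal $(y_{1,1},\ldots,y_{1,a_1})\cdots(y_{n,1},\ldots,y_{n,a_n})$. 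A short argument writing an arbitrary monomial $m \in \bigcap_i\p_i$ as $m = x_k \cdot m'$ for some $k$ (reducing to one fewer variable) or else $m$ is a pure $y$-monomial in every block, finishes the identification; combined with the previous paragraph this yields the claimed equality.

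I expect the main obstacle to be the colon computation $\p_i^n : (x_1\cdots x_n) = \p_i$ done with full rigor — one must be careful that $x_1\cdots x_n$ has exactly the right number of factors in $\p_i$ and that the single stray factor $x_i$ genuinely does not lower the exponent further, and symmetrically that it is not too few, so that the answer is $\p_i$ (degree $1$) rather than $\p_i^2$ or $(1)$. Once that power-of-a-prime colon is pinned down, distributing the colon over the finite intersection (legal since $x_1\cdots x_n$ is a single monomial) and the combinatorial identification of $\bigcap_i \p_i$ are routine.
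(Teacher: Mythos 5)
Your overall route --- distribute the colon over the powers of the minimal primes, compute each $\p^n : (x_1\cdots x_n)$, then identify the resulting intersection combinatorially --- is sound and is genuinely different from the paper's proof, which instead expands $I(W_\a)^{(n)}$ via Sullivant's clique formula for chordal graphs and inducts on $n$ using Lemma~\ref{lem_colon_complete_graphs}. Your approach is arguably more direct. However, there is one concrete error: the minimal primes of $I(W_\a)$ are \emph{not} exactly your $\p_1,\ldots,\p_n$. The prime $\p_0=(x_1,\ldots,x_n)$, coming from the minimal vertex cover consisting of all the $x_i$ (minimal because deleting $x_i$ uncovers the whisker edges at $x_i$), must be included. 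Indeed $\bigcap_{i=1}^n\p_i$ contains $y_{1,1}y_{2,1}\cdots y_{n,1}\notin I(W_\a)$, so your claimed decomposition $I(W_\a)=\bigcap_{i=1}^n\p_i$ is false; the correct one is $I(W_\a)=\p_0\cap\bigcap_{i=1}^n\p_i$, hence $I(W_\a)^{(n)}=\p_0^n\cap\bigcap_{i=1}^n\p_i^n$.

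Fortunately the omission is harmless for the final answer: since $x_1\cdots x_n\in\p_0^n$, one has $\p_0^n:(x_1\cdots x_n)=S$, so the extra factor contributes the unit ideal to the intersection and $I(W_\a)^{(n)}:(x_1\cdots x_n)=\bigcap_{i=1}^n\p_i$ as you claimed. The remaining steps check out: $x_1\cdots x_n$ has degree exactly $n-1$ in the variables generating $\p_i$ (namely the $x_j$ with $j\ne i$), and $x_i\notin\p_i$ contributes nothing, so $\p_i^n:(x_1\cdots x_n)=\p_i^{n-(n-1)}=\p_i$; and the identification of $\bigcap_{i=1}^n\p_i$ with $I(W_\a)+(y_{1,1},\ldots,y_{1,a_1})\cdots(y_{n,1},\ldots,y_{n,a_n})$ follows by the case analysis you sketch (a monomial in the intersection either contains two distinct $x_j,x_k$, or one $x_j$ forcing some $y_{j,\ell}$ alongside it, or is a pure $y$-monomial meeting every whisker block). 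With the decomposition corrected, the proof is complete.
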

\begin{proof} For simplicity of notation, we set $X = \{x_1,\ldots,x_n\}$ and $Y = \{y_{i,j} \mid i = 1,\ldots, n, j = 1,\ldots,a_i\}$. We also denote $I = I(W_\a)$ and 
$$J= I(W_\a) + (y_{1,1},\ldots,y_{1,a_1}) (y_{2,1}, \ldots, y_{2,a_2}) \cdots (y_{n,1}, \ldots, y_{n,a_n}).$$ 
For each $C \subseteq V(W_\a)$, let $m_C = \prod_{x\in C} x$ be a monomial in 
$$S = \k[x_1,\ldots,x_n,y_{1,1},\ldots,y_{1,a_1},\ldots,y_{n,1},\ldots,y_{n,a_n}].$$
Since $W_\a$ is a chordal graph, by \cite[Theorem 3.10]{S}, we have

\begin{equation}\label{eq_Sullivant}
    I^{(n)} = \left ( m_{C_1} \cdots m_{C_t} \mid C_1,\ldots,C_t \text{ are cliques of } W_\a \text{ and } \sum_{i=1}^{t} \left ( |C_i| - 1 \right ) = n \right ).
\end{equation}
Note that the cliques $C_1,\ldots, C_t$ are not necessarily distinct. In $W_\a$, $C \subseteq V(W_\a)$ is a clique if and only if either $C = \{x_i,y_{i,j}\}$ for some $i = 1, \ldots, t$ and $j = 1, \ldots, a_i$ or $C \subseteq X$. In particular, $(x_1\cdots x_n) e \in I^{(n)}$ for all edges $e$ of $W_\a$ and $(x_1 y_{1,j_1}) \cdots (x_n y_{n,j_n}) \in I^{(n)}$ for all $j_1, \ldots, j_n$ such that $1 \le j_\ell \le a_\ell$. Hence, 
\begin{equation}\label{eq_colon_whisker_1}
    J \subseteq I(W_\a)^{(n)} : (x_1 \cdots x_n).
\end{equation}
We now prove by induction on $n$ the reverse containment 
\begin{equation}\label{eq_colon_whisker_2}
    I(W_\a)^{(n)} : (x_1 \cdots x_n) \subseteq J.
\end{equation}
The base case $n = 2$ is clear. Thus, assume that $n \ge 3$. Let $C_1,\ldots,C_t$ be cliques of $W_\a$ such that $\sum_{i=1}^t (|C_i| - 1) = n$. Let $M = m_{C_1} \cdots m_{C_t}$ and $f = x_1 \cdots x_n$. It suffices to prove that $M / \gcd (M, f)  \in J.$ Since $|C_i| \le n$ for all $i = 1, \ldots, t$, we must have $t \ge 2$. We have two cases.

\vspace{1.5mm}
\noindent{\textbf{Case 1.}} $C_i \cap Y = \emptyset$ for all $i= 1, \ldots,t$. In this case, we have $M \in I(K_n)^{(n)}$. By Lemma \ref{lem_colon_complete_graphs}, we deduce that $M/\gcd(M,f) \in I(K_n) \subseteq I(W_\a)$.

\vspace{1.5mm}
\noindent{\textbf{Case 2.}} $C_i \cap Y \neq \emptyset$ for some $i \in \{1,\ldots,t\}$. Since $Y$ is the set of leaves, we deduce that $|C_i| = 2$. For simplicity, we assume that $C_1 = \{x_1,y_{1,1}\}$. If there exists a clique $C_i$ for some $i = 2, \ldots, t$ such that $C_1 \cap C_i \neq \emptyset$, then we must have $x_1 \in C_1 \cap C_i$. In particular, we deduce that $x_1 y_{1,1} \mid M / \gcd(M,f)$. Hence, $M/\gcd(M,f) \in J$. Thus, we may assume that $C_i \cap C_1 = \emptyset$ for all $i = 2, \ldots, t$. In other words, $C_i \subseteq X' \cup Y'$ where $X' = \{x_2,\ldots,x_n\}$ and $Y' =\{ y_{i,j} \mid i = 2,\ldots, n, j = 1, \ldots, a_i\}$. Furthermore, we have $\sum_{i = 2}^t (|C_i| - 1) = n -1$. By Eq. \eqref{eq_Sullivant}, we deduce that $M' = m_{C_2} \cdots m_{C_t} \in I(W_{\a'})^{(n-1)}$, where $\a' = (a_2,\ldots,a_n)$ and $W_{\a'}$ is the whisker graph obtained by gluing $a_i$ leaves to the vertex $i$ of the complete graph on $\{2,\ldots,n\}$. Since $y_{1,1}$ does not divide $f$, we deduce that $y_{1,1} (M'/ \gcd(M',f')) \mid M/\gcd(M,f)$, where $f' = x_2 \cdots x_n$. By induction on $n$, the conclusion follows.
\end{proof}

\begin{proof}[Proof of Theorem \ref{thm_whisker_complete_graph}] We may assume that $a_1 \ge a_2 \ge \cdots \ge a_n \ge 1$. We keep the notations as in Lemma \ref{lem_colon_whisker}.

For ease of reading, we divide the proof into several steps.

\vspace{1.5mm}
\noindent{\textbf{Step 1.}} $\bc'(W_\a) = n-1$. As in the proof of Proposition \ref{prop_whisker_1}, we deduce that a maximal induced bipartite subgraph $H$ of $W_\a$ is an induced subgraph of $W_\a$ on $Y \cup \{x_i,x_j\}$ for some $i \neq j$. For such $H$, we have $c(H) = |\a| - (a_i + a_j) + 1$ but $c'(H) = n-1$ as $\{y_{\ell,1}, \ldots, y_{\ell,a_\ell}\}$ can be clustered into a bouquet in $G$ for all $\ell = 1, \ldots, n$. Thus, $\bc(W_\a) = a_3 + \cdots + a_n+1$ and $\bc'(W_\a) = n-1$.

\vspace{1.5mm}
\noindent{\textbf{Step 2.}} $\depth S/I(W_\a)^{(s)} \ge n-1$ for all $s\ge 1$ and all $\a$ such that $a_i \ge 1$ for $i = 1, \ldots, n$.

First, assume that $s = 1$. By Lemma \ref{lem_depth_whisker}, $\depth S/I(W_\a) = a_2 + \cdots + a_n + 1$. When $a_1 = \cdots = a_n = 1$, the conclusion follows from Proposition \ref{prop_whisker_1}. Thus, we may assume that $s \ge 2$ and $a_1 \ge 2$. By induction, Lemma \ref{lem_depth_colon}, and Lemma \ref{lem_colon_leaf}, it suffices to prove that 
    $$ \depth S/ (I(W_\a)^{(s)},x_1y_{1,a_1}) \ge n-1.$$
    
    Let $J = I(W_\a)^{(s)}$. Then $(J,x_1y_{1,a_1}) = (J,x_1) \cap (J,y_{1,a_1})$. Denote $\a'= (a_2, \ldots, a_n)$ and $W_{\a'}$ the whisker graph obtained by gluing $a_i$ leaves to the vertex $i$ of the complete graph on $\{2, \ldots, n\}$. We have $(J,x_1) = (I(W_{\a'})^{(s)},x_1)$ and $(J,x_1,y_{1,a_1}) = (I(W_{\a'})^{(s)},x_1,y_{1,a_1})$. By Lemma \ref{lem_depth_variables_clearing}, 
    \begin{align*}
        \depth S/(J,x_1) & = a_1 + \depth R/I(W_{\a'})^{(s)}, \\
        \depth S/(J,x_1,y_{1,a_1}) & = a_1 -1 + \depth R/I(W_{\a'})^{(s)},
    \end{align*}
    where $R = \k[x_2, \ldots, x_n, y_{2,1}, \ldots, y_{2,a_2}, \ldots, y_{n,1}, \ldots, y_{n,a_n}]$. By induction, both terms are at least $n - 1$. Finally, $(J,y_{1,a_1}) = (I(W_{\a^{''}})^{(s)},y_{1,a_1})$ where $\a^{''} = (a_1 - 1, a_2, \ldots, a_n)$. Hence, 
    $$\depth S/(J,y_{1,a_1}) = \depth T/I(W_{\a^{''}})^{(s)},$$
    where $T = \k[x_1, \ldots, x_n,y_{1,1}, \ldots, y_{1,a_1-1}, \ldots, y_{n,1}, \ldots, y_{n,a_n}]$. Thus, the conclusion of Step 2 follows from induction and Lemma \ref{lem_short_exact_seq}.

\vspace{1.5mm}
\noindent{\textbf{Step 3.}} $\depth S/I(W_\a)^{(s)} \le n-1$ for all $s \ge n$. 

By Lemma \ref{lem_upperbound} and Lemma \ref{lem_colon_leaf}, it suffices to prove that 
    $$\depth S/ I(W_\a)^{(n)} : (x_1\cdots x_n) \le n-1.$$
Let $J = I(W_\a)^{(n)} : (x_1 \cdots x_n)$. By Lemma \ref{lem_colon_whisker}, we have that 
$$J= I(W_\a) + (y_{1,1},\ldots,y_{1,a_1}) (y_{2,1}, \ldots, y_{2,a_2}) \cdots (y_{n,1}, \ldots, y_{n,a_n}).$$
Therefore, the Stanley-Reisner complex $\Delta(J)$ of $J$ has exactly $n$ facets 
$$F_i = \{ x_i\} \cup \{ y_{j,\ell} \mid j \neq i, \ell = 1, \ldots, a_j\}.$$ 
Hence, $F_1 \cap \cdots \cap F_n = \emptyset$ and for any $j$, we have 
$$F_1 \cap \cdots \cap F_{j-1} \cap F_{j+1} \cap \cdots \cap F_{n} = \{ y_{j,1}, \ldots, y_{j,a_j}\}.$$
Therefore, the Nerve complex of $\Delta(J)$ is isomorphic to the $n-2$-sphere. By Theorem \ref{thm_nerve}, $\h_{n-2} (\Delta(J);\k) \neq 0$. By Lemma \ref{lem_depth_formula}, the conclusion follows.
\end{proof}

\begin{rem} \begin{enumerate}
    \item The notion of maximal bipartite subgraphs of a graph has been studied by many researchers as early as \cite{E, M}. They are interested in finding the maximum number of edges of a maximal bipartite subgraph of $G$. 
    \item In general, the problem of finding a maximum induced bipartite subgraph of a graph is NP-complete \cite{LY}. Nonetheless, we do not know if the problem of computing the bipartite connectivity number or restricted bipartite connectivity number is NP-complete.
\end{enumerate}    
\end{rem}

\begin{rem} \begin{enumerate}
    \item The Cohen-Macaulay property, or depth of the edge ideal of a graph might depend on the characteristic of the base field. For example, consider the following ideal in \cite[Exercise 5.3.31]{Vi}
    \begin{align*}
        I & = (x_1x_3,x_1x_4,x_1x_7,x_1x_{10},x_1x_{11},x_2x_4,x_2x_5,x_2x_8,x_2x_{10},x_2x_{11},\\
        &x_3x_5,x_3x_6,x_3x_8,x_3x_{11},x_4x_6,x_4x_9,x_4x_{11},\\
        &x_5x_7,x_5x_9,x_5x_{11},x_6x_8,x_6x_9,x_7x_9,x_7x_{10},x_8x_{10}).
    \end{align*}
    Then $$\depth S/I = \begin{cases}
        2 & \text{ if } \charr \k = 2, \\
        3 & \text{ otherwise}.
    \end{cases}$$
    But $\depth S/I^{(s)} = 1$ for all $s \ge 2$, regardless of the characteristic of the base field $\k$.
    \item By the result of the second author \cite{T}, the stable value of depth of powers of edge ideals of graphs does not depend on the characteristic of the base field $\k$. If Conjecture \ref{conj_stable_depth_symbolic} holds, the stable value of depth of symbolic powers of edge ideals also does not depend on the characteristic of the base field $\k$. This is in contrast to the asymptotic behavior of the regularity of (symbolic) powers of edge ideals as \cite[Corollary 5.3]{MV} shows that the linearity constant of the regularity function of (symbolic) powers of edge ideals of graphs might depend on the characteristic of the base field $\k$.
\end{enumerate}
\end{rem}
\section{Depth of symbolic powers of edge ideals of cycles}\label{sec_depth_symbolic_power}

In this section, we compute the depth of symbolic powers of edge ideals of cycles. The purpose of this is twofold. First, together with Proposition \ref{prop_whisker_1}, this gives the first classes of non-bipartite graphs where one computes explicitly the depth of symbolic powers of their edge ideals. Second, this shows that the stabilization index of depth of symbolic powers of $G$ is tightly connected to the stabilization index of depth of powers of maximal induced bipartite subgraphs of $G$. 

We fix the following notation. Let $S = \k[x_1,\ldots,x_n]$ and $C_n$ be a cycle of length $n$. For each $i = 1, \ldots, n-1$, we denote $e_i = x_ix_{i+1}$. Let $\varphi(n,t) = \lceil \frac{n-t+1}{3} \rceil$. We recall the following results (Lemma 3.4, Lemma 3.10, Lemma 3.11, and Theorem 1.1) from \cite{MTV}.

\begin{lem}\label{lem_lowerbound_path} Let $H$ be any subgraph of $P_n$. Then, for any positive integer $t$ with $t < n$, we have that
$$ \depth \left ( S/ (I(P_n)^t + I(H)) \right ) \ge \varphi(n,t).$$    
\end{lem}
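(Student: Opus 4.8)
The statement to prove is Lemma~\ref{lem_lowerbound_path}: for any subgraph $H$ of $P_n$ and any $t$ with $1 \le t < n$, we have $\depth(S/(I(P_n)^t + I(H))) \ge \varphi(n,t) = \lceil \frac{n-t+1}{3}\rceil$. Since this is cited from \cite{MTV}, the natural plan is to reproduce the inductive argument from that paper. I would argue by induction on $n$ (and, within fixed $n$, a secondary induction on $t$, with the degenerate cases handled directly).

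\medskip

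The plan is as follows. First I would dispose of the base cases: when $n$ is small relative to $t$ the quantity $\varphi(n,t)$ is small (often $1$), and one can check directly that the depth is at least that, using that $\m$ is not associated to $I(P_n)^t + I(H)$ when $t \ge 2$ (so depth is at least $1$), and using known formulas for $\depth S/I(P_n)$ and for Stanley--Reisner rings when $t=1$. For the inductive step I would localize at a well-chosen variable or monomial. The key technical tool is Lemma~\ref{lem_depth_colon}: $\depth S/J \in \{\depth S/(J:f),\ \depth S/(J,f)\}$ for a monomial $f$, so it suffices to bound both colon and the quotient from below by $\varphi(n,t)$. A good choice of $f$ is an end edge, say $f = e_1 = x_1x_2$, or an end variable $x_1$. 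Colon by $e_1$: by the standard computation $I(P_n)^t : e_1 = I(P_n)^{t-1} + (\text{something supported near the end})$, which after clearing the vertices $x_1,x_2$ reduces to a problem on a shorter path $P_{n-2}$ or $P_{n-3}$ with exponent $t-1$ plus an extra subgraph; induction (together with Lemma~\ref{lem_depth_variables_clearing} to account for the cleared variables, which only add to the depth) gives a bound of the form $\varphi(n-c, t-1) + (\text{nonnegative})$, and one checks $\varphi(n-c,t-1) \ge \varphi(n,t)$ for the relevant $c$. Setting $f = x_1$ (or $(J,e_1)$) kills a vertex/edge and reduces to $P_{n-1}$ or $P_{n-2}$ with the same exponent $t$, again invoking the induction hypothesis and checking $\varphi(n-1,t) \ge \varphi(n,t)$ etc.

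\medskip

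The arithmetic lemma underlying all the reductions is the set of inequalities $\varphi(n-1,t) \ge \varphi(n,t)$, $\varphi(n-2,t-1) \ge \varphi(n,t)$, $\varphi(n-3,t-1) \ge \varphi(n,t)$ and similar, all of which follow from the explicit formula $\varphi(n,t) = \lceil (n-t+1)/3\rceil$ by elementary ceiling estimates; I would isolate these as a small sublemma so the induction goes through cleanly. One subtlety is that $I(H)$ for an arbitrary subgraph $H$ must be carried along: when we colon by $e_1$ or set $x_1 = 0$, the extra ideal $I(H)$ specializes to the edge ideal of a subgraph of the shorter path (possibly with some isolated extra generators), so the induction hypothesis must be applied in the full generality stated — with $I(H)$ present — which is exactly why the lemma is formulated for all subgraphs $H$ rather than just $H = \emptyset$.

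\medskip

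The main obstacle I anticipate is not any single deep idea but the bookkeeping in the inductive step: one must choose the splitting monomial $f$ so that \emph{both} $S/(J:f)$ and $S/(J,f)$ reduce to strictly smaller instances of the \emph{same} statement, track precisely how many vertices are cleared (to apply Lemma~\ref{lem_depth_variables_clearing} correctly), and verify that the resulting $\varphi$-values dominate $\varphi(n,t)$ in every case. A secondary subtlety is the behavior at the boundary $t$ close to $n$, where $\varphi(n,t)$ drops to $1$ and the ``generic'' reduction may fail to strictly decrease the parameters; there one falls back on the elementary fact that $\depth S/(I(P_n)^t + I(H)) \ge 1$ whenever $t \ge 2$ because the maximal ideal is not associated. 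Once these cases are organized, the proof is a routine double induction following the pattern of \cite{MTV}.
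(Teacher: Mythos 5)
The first thing to say is that this paper does not actually prove Lemma \ref{lem_lowerbound_path}: it is recalled verbatim from \cite{MTV} (Lemma 3.4 there), so there is no in-paper proof to measure your argument against. That said, your proposed strategy --- double induction on $(n,t)$, splitting on an end edge or end vertex via Lemma \ref{lem_depth_colon}, clearing variables with Lemma \ref{lem_depth_variables_clearing}, and closing the induction with elementary ceiling inequalities for $\varphi$ --- is the standard template in this circle of ideas, and it matches the kind of reduction the present paper performs elsewhere (compare the proof of Claim 2 in Theorem \ref{depth_symbolic_power_cycle}, where the authors compute $J : (x_{n-2}x_{n-1}) = I(P_{n-1})^{s-1} + I(H') + (x_n)$ and then invoke this very lemma).

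However, as written your proposal is a plan rather than a proof, and the steps it defers are exactly where the content lies. Concretely: (i) the identity $(I(P_n)^t + I(H)) : e_1 = (I(P_n)^t : e_1) + (I(H) : e_1)$ does hold for monomial ideals, and $I(P_n)^t : e_1 = I(P_n)^{t-1}$ for the end edge $e_1 = x_1x_2$, but $I(H) : e_1$ can contain the single variable $x_3$ (from $x_2x_3 : x_1x_2$); killing $x_3$ disconnects the path into $\{x_1,x_2\}$ and $\{x_4,\ldots,x_n\}$, so the colon ideal is \emph{not} an instance of the lemma for a single shorter path --- one lands on a power of a sum of edge ideals in disjoint variable sets and needs either a strengthened induction hypothesis or the results on powers of sums (\cite{NV2, HNTT}), together with a subadditivity check for $\varphi$ across the two pieces. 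Your phrase ``reduces to a problem on a shorter path $P_{n-2}$ or $P_{n-3}$ with exponent $t-1$ plus an extra subgraph'' silently assumes this never happens. (ii) The degenerate branches ($e_1 \in E(H)$, where the colon is the unit ideal; and $t$ close to $n$, where $\varphi(n,t)=1$ and the reduction may fail to shrink the parameters) are flagged but not resolved; in particular the fallback ``$\m$ is not associated, so depth is at least $1$'' requires an argument in the presence of an arbitrary $I(H)$. None of these obstacles looks fatal, but until the inductive step is actually executed for a specific choice of splitting monomial with every branch checked, the proposal cannot be accepted as a proof of the statement.
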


\begin{lem}\label{lem_lowerbound_cycle_aux} Let $H$ be a non-empty subgraph of $C_n$. Then for $t \ge 2$, we have that 
    $$\depth (S/(I(C_n)^t + I(H))) \ge \varphi(n,t).$$
\end{lem}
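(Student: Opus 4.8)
The plan is to induct on the exponent $t$, using the path estimate of Lemma~\ref{lem_lowerbound_path} for the ``opened up'' configurations and the inductive hypothesis for the lower power, held together by the fact that $\varphi(n,t)=\lceil (n-t+1)/3\rceil$ is non-increasing in $t$, so that $\varphi(n,t-1)\ge\varphi(n,t)$. Write $J=I(C_n)^t+I(H)$ and recall that $C_n=P_n\cup\{e_n\}$, where $P_n$ is the path on $1,\dots,n$ and $e_n=x_1x_n$ is the chord joining its endpoints. The chord is the natural element to split on, because every generator of $I(C_n)^t$ that involves $e_n$ lies in $(e_n)$; hence powers of $I(C_n)$ and of $I(P_n)$ coincide modulo $e_n$, that is, $(I(C_n)^t,e_n)=(I(P_n)^t,e_n)$.

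Concretely, I would feed $J$ into the short exact sequence
\[ 0 \to S/(J:e_n) \xrightarrow{\,\cdot e_n\,} S/J \to S/(J,e_n) \to 0, \]
and apply Lemma~\ref{lem_short_exact_seq}(1), reducing the claim to bounding the two outer terms below by $\varphi(n,t)$. For the \emph{colon} term, $e_n\in I(C_n)$ forces $I(C_n)^{t-1}\subseteq I(C_n)^t:e_n$; using that for $n\ge 5$ the endpoints $x_1,x_n$ share no common neighbour (so no spurious square terms enter the colon, in contrast to the triangle situation), one checks that $J:e_n=I(C_n)^{t-1}+I(H_1)+L$, where $H_1\subseteq C_n$ and $L$ is generated by the variables in $N_H(x_1)\cup N_H(x_n)$. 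When $L=0$ this is the same shape at exponent $t-1$, so the inductive hypothesis gives $\varphi(n,t-1)\ge\varphi(n,t)$; when $L\ne 0$ one clears those variables via Lemma~\ref{lem_depth_variables_clearing}, which deletes vertices and opens $C_n$ into shorter paths to which Lemma~\ref{lem_lowerbound_path} applies. For the \emph{quotient} term, the congruence above rewrites $(J,e_n)$ as $I(P_n)^t+I(H)+(e_n)$, a path power with one added long edge; splitting off an endpoint reduces it to the path on $n-1$ vertices and returns $\varphi(n-1,t)$.

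The main obstacle is entirely numerical: one must not lose a unit in the $\varphi$-count. Any reduction that merely deletes a vertex collapses $C_n$ to a path on $n-1$ vertices and returns only $\varphi(n-1,t)$, and $\varphi(n-1,t)=\varphi(n,t)-1$ precisely when $n-t\equiv 0\pmod 3$. This is the critical residue class, and it is exactly where the quotient term above falls short, so the whole difficulty is concentrated there. The reason for splitting on the chord rather than on a vertex is that the colon lowers the exponent while \emph{preserving the length}, and the clearing of a single neighbour variable is then absorbed by the numerical coincidence $\varphi(n-1,t-1)=\varphi(n,t)$. The delicate bookkeeping is to verify, when several variables of $L$ are cleared simultaneously, that the resulting paths remain long enough for Lemma~\ref{lem_lowerbound_path} to still yield $\varphi(n,t)$, and to treat the critical class $n-t\equiv 0\pmod 3$ by a dedicated argument showing that in that case $\depth S/J$ is governed by the length-preserving colon term rather than by the length-dropping quotient term. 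Securing this accounting (together with the base case $t=2$, which rests on the analogous estimate at $t=1$) closes the induction.
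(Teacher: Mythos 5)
First, note that the paper does not prove this lemma at all: it is quoted verbatim from \cite{MTV} (``We recall the following results \dots from \cite{MTV}''), so there is no in-paper argument to compare against and any proof you give must stand entirely on its own. Measured against that standard, your proposal has a concrete gap at its core, namely the description of the colon term. You claim that $J:e_n=I(C_n)^{t-1}+I(H_1)+L$ with $H_1\subseteq C_n$ and $L$ generated by variables, justified by the absence of a common neighbour of $x_1$ and $x_n$. That is not what the colon looks like: already for $t=2$ one has $x_2x_{n-1}\cdot x_1x_n=(x_1x_2)(x_{n-1}x_n)\in I(C_n)^2$, so $x_2x_{n-1}\in I(C_n)^2:e_n$, and for $n\ge 5$ the pair $\{2,n-1\}$ is not an edge of $C_n$. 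Taking, say, $H=\{x_4x_5\}$ with $n\ge 6$ gives $L=0$, and then $x_2x_{n-1}$ lies in $J:e_n$ but in no ideal of your claimed form. These ``even-connection'' quadrics (chords of the cycle) appear in $I(C_n)^t:e$ for every edge $e$ and every $t\ge 2$, so the colon term is not an instance of the inductive hypothesis and the induction does not close. This is precisely why the arguments in \cite{MTV} colon by products of consecutive edges and decompose along vertices/edges of $H$ rather than by a single cycle edge.

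Even setting that aside, the two places you yourself flag as delicate are left genuinely open. For the critical class $n-t\equiv 0\pmod 3$ you correctly compute that the quotient term only yields $\varphi(n-1,t)=\varphi(n,t)-1$, but the tools available (Lemma \ref{lem_short_exact_seq}(1) gives $\depth\ge\min$ of the two outer terms; Lemma \ref{lem_depth_colon} gives membership in a two-element set without telling you which) cannot establish that the depth is ``governed by'' the colon term; a min over two bounds is only as good as its smaller entry, so the promised ``dedicated argument'' is exactly the missing content. Finally, the base case is shaky: at $t=1$ one has $I(C_n)+I(H)=I(C_n)$ and $\depth S/I(C_n)=\lceil (n-1)/3\rceil$, which is strictly less than $\varphi(n,1)=\lceil n/3\rceil$ whenever $n\equiv 1\pmod 3$, so the ``analogous estimate at $t=1$'' you want to rest $t=2$ on is false for the cycle itself. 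As written, the proposal is a plausible strategy sketch but not a proof.
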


\begin{lem}\label{lem_upperbound_cycle} Assume that $I = I(C_n)$ and $t \le n-2$. Then 
$$\depth (S/(I^{t} : (e_2 \cdots e_t))) \le \varphi(n,t).$$
\end{lem}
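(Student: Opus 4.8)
The plan is to bound the depth of the colon ideal by first replacing it with its radical and then computing the depth of an explicit edge ideal. Write $J=I(C_n)^t:(e_2\cdots e_t)$. Since $J$ is a monomial ideal, I would begin with the inequality $\depth S/J\le \depth S/\sqrt{J}$ --- the same move used in the proof of Lemma \ref{lem_upperbound_stable_value} --- so that it is enough to bound $\depth S/\sqrt J$. This trades a possibly non-squarefree colon for a squarefree ideal, to which the Stanley--Reisner/Hochster machinery of Lemma \ref{lem_depth_formula} applies.

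The heart of the argument is to show that $\sqrt J=I(G')$, where $G'$ is obtained from $C_n$ by completing the arc on the vertices $\{1,2,\dots,t+2\}$ into the complete bipartite graph $K_{O,E}$ between its odd- and even-indexed vertices $O$ and $E$, while the complementary arc on $\{t+2,t+3,\dots,n,1\}$ remains a path attached to the block at the two vertices $1$ and $t+2$. The hypothesis $t\le n-2$ is exactly what guarantees $t+2\le n$, so that the block and the residual path do not wrap around and overlap. The inclusion $I(G')\subseteq\sqrt J$ comes from the rerouting identity $$e_ie_{i+2}\cdots e_{i+2m}=(e_{i+1}e_{i+3}\cdots e_{i+2m-1})(x_ix_{i+2m+1}),$$ valid whenever $i\ge1$ and $i+2m-1\le t$: the left-hand side lies in $I^{m+1}$, the factored-out edges $e_{i+1},e_{i+3},\dots,e_{i+2m-1}$ form a sub-product of $e_2\cdots e_t$, and multiplying by the remaining $t-1-m$ edges shows $x_ix_{i+2m+1}(e_2\cdots e_t)\in I^t$; letting $i,m$ vary produces precisely all odd-distance pairs inside $\{1,\dots,t+2\}$, i.e. all edges of $K_{O,E}$. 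For the reverse inclusion one checks, via the degree count $\deg(x_ux_v\,e_2\cdots e_t)=2t=\deg(\text{product of }t\text{ edges})$, that a degree-two generator forces the monomial $x_ux_v\,e_2\cdots e_t$ to factor exactly into $t$ edges of $C_n$, which confines $\{u,v\}$ to the edges of $G'$; equivalently, this is the even-connectedness description of colons of powers of edge ideals.

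With $G'$ identified, I would compute $\depth S/I(G')\le\varphi(n,t)$ through Lemma \ref{lem_depth_formula} and the Nerve Theorem \ref{thm_nerve}, in the spirit of Step 3 of the proof of Theorem \ref{thm_whisker_complete_graph}. Because $K_{O,E}$ is complete bipartite, any face of the independence complex $\Delta=\Delta(I(G'))$ that meets the block lies entirely on the $O$-side or entirely on the $E$-side there. I would then select a face $F$ (a single vertex, or a small independent set, drawn from the block or the near end of the residual path) so that the link $\lk_\Delta F$ splits as a join of a $0$-sphere --- the two surviving block sides --- with the independence complex of the residual path; being a suspension, its reduced homology sits in degree $\varphi(n,t)-1-|F|$, so $\h_{\varphi(n,t)-1-|F|}(\lk_\Delta F;\k)\ne0$ and hence $\depth S/I(G')\le |F|+(\varphi(n,t)-|F|)=\varphi(n,t)$. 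This is consistent with the small cases: for $(n,t)=(5,2)$ one takes $F=\{3\}$ and $\lk_\Delta F$ is the two-point complex $\{\{1\},\{5\}\}$, while for $(n,t)=(7,3)$ one takes $F=\{6\}$ and $\lk_\Delta F$ is the disjoint union of the edges $\{1,3\}$ and $\{2,4\}$; both give $\h_0\ne0$ and $\depth\le2=\varphi$.

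The main obstacle is the two combinatorial identifications. Establishing $\sqrt J=I(G')$ exactly requires ruling out chords reaching past vertex $t+2$ and confirming that any non-squarefree part of $J$ vanishes under radical --- both controlled by which monomials $x_ux_v\,e_2\cdots e_t$ decompose into $t$ edges of $C_n$. The second, and more delicate, point is pinning the exact homological degree of $\lk_\Delta F$: the way the bipartite block meets the residual path at the shared vertices $1$ and $t+2$ governs the dimension of the suspended sphere, and matching it to $\lceil(n-t+1)/3\rceil$ reduces to the known homotopy type of the independence complex of a path. Should the homology bookkeeping prove unwieldy, a fallback is to apply Lemma \ref{lem_upperbound} directly to $J$ with an explicit monomial $f$ (a product of vertices of the used arc), reducing $J:f=I(C_n)^t:(e_2\cdots e_t f)$ to an ideal of the form $I(P_m)+(\text{linear forms})$ whose depth is read off from Lemma \ref{lem_depth_variables_clearing} together with $\depth S/I(P_m)=\lceil m/3\rceil$, exactly as in the computation $J:x_3=(x_2,x_4)+(x_1x_5)$ for $(n,t)=(5,2)$.
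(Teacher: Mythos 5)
This lemma is not proved in the paper at all: it is quoted verbatim from \cite{MTV} (see the sentence ``We recall the following results \dots from \cite{MTV}'' preceding it), so there is no in-paper argument to compare yours against; I can only assess your proposal on its own terms. The first two of your three steps are essentially right. The reduction $\depth S/J\le\depth S/\sqrt J$ is the Herzog--Takayama--Terai inequality, also used in the proof of Lemma \ref{lem_upperbound_stable_value}. Your description of $G'$ is correct: the even-connected pairs with respect to $e_2\cdots e_t$ are exactly the pairs at odd distance $\ge 3$ inside $\{1,\dots,t+2\}$, and your rerouting identity proves $I(G')\subseteq J$ cleanly. But note that your argument for the reverse inclusion only inspects \emph{degree-two} elements of $J$; to conclude $J\subseteq I(G')$ you must know that $J=I^t:(e_2\cdots e_t)$ is generated in degree two, which is precisely Banerjee's even-connection theorem. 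That result is citable, so this is a fixable omission rather than an error, but as written the step is circular (``equivalently, this is the even-connectedness description'' is doing all the work).

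The genuine gap is in the homological endgame, and it is not mere bookkeeping: the assertion that $\lk_\Delta F$ ``splits as a join of a $0$-sphere with the independence complex of the residual path'' fails for natural choices of $F$. Take $n-t=6$ (so $\varphi(n,t)=3$) and $F=\{t+4\}$, the middle interior vertex of the residual path: deleting $N[F]$ leaves the block $K_{O,E}$ with the vertex $n$ still pendant at the vertex $1\in O$, and a Mayer--Vietoris computation shows this link is \emph{contractible} -- no nonvanishing reduced homology in any degree. One must instead choose $F$ (here $F=\{t+4,n\}$ works, giving $\h_0\ne 0$ and $|F|+i=3$) so that $N[F]$ swallows the entire interior of the residual path, or leaves path remnants whose independence complexes are spheres and which are genuinely disconnected from the block; which $F$ works, and whether $|F|+i$ lands exactly on $\lceil (n-t+1)/3\rceil$, depends on the residue of $n-t$ modulo $3$ and must be checked in each case (including the degenerate case $t=n-2$, where the residual path is the single edge $\{n,1\}$ inside $O$ and one uses $F=\emptyset$ with $\h_0(\Delta)\ne0$). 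Your two worked examples happen to fall in the residue classes where the naive choice succeeds. So what you have is a correct and verifiable \emph{plan} whose decisive step -- producing the face $F$ realizing the bound in every congruence class -- is asserted rather than carried out, and whose literal statement about the structure of the link is false in at least one of those classes.
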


\begin{thm}\label{depth_power_cycle}
    Let $I(C_n)$ be the edge ideal of a cycle of length $n \ge 5$. Then 
    $$\depth \left ( S/I(C_n)^t \right ) = \begin{cases} \left \lceil \frac{n-1}{3} \right \rceil, & \text { if } t = 1,\\
    \left \lceil \frac{n-t+1}{3} \right \rceil, & \text { if } 2 \le t < \left \lceil \frac{n+1}{2} \right  \rceil,\\
    1, & \text{ if } n \text{ is even and } t \ge  \frac{n}{2} + 1,\\
    0, & \text{ if } n \text{ is odd and } t \ge \frac{n+1}{2}.\end{cases}$$
\end{thm}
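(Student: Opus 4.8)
The plan is to split the statement into the three exponent ranges it describes and to handle $t=1$ on its own. Write $I=I(C_n)$ and recall $\varphi(n,t)=\lceil (n-t+1)/3\rceil$. The case $t=1$ gives the classical value $\depth(S/I(C_n))=\lceil (n-1)/3\rceil$; note this equals $\varphi(n,2)$ rather than $\varphi(n,1)$, so it genuinely falls outside the uniform pattern and I would simply invoke the known formula (e.g. from the independence-complex description of $C_n$) instead of trying to absorb it into the induction below. The heart of the theorem is the intermediate range $2\le t<\lceil (n+1)/2\rceil$, where I aim to prove $\depth(S/I^t)=\varphi(n,t)$ by matching an upper and a lower bound.

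For the upper bound I would apply Lemma \ref{lem_upperbound} with $f=e_2\cdots e_t$, a product of $t-1$ edges and hence a monomial not in $I^t$; this gives $\depth(S/I^t)\le \depth(S/(I^t:e_2\cdots e_t))$, and Lemma \ref{lem_upperbound_cycle} bounds the right-hand side by $\varphi(n,t)$, its hypothesis $t\le n-2$ being satisfied throughout this range since $n\ge 5$. For the lower bound I would fix an edge $e$ and use the short exact sequence $0\to S/(I^t:e)\to S/I^t\to S/(I^t,e)\to 0$, so that Lemma \ref{lem_short_exact_seq}(1) yields $\depth(S/I^t)\ge \min\{\depth(S/(I^t:e)),\,\depth(S/(I^t,e))\}$. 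The quotient term is $\depth(S/(I^t+I(\{e\})))\ge \varphi(n,t)$ directly from Lemma \ref{lem_lowerbound_cycle_aux}. The colon term is the delicate one: colonizing by $e$ ``opens'' the cycle at that edge, and I would show that $I^t:e$ relates (via the decomposition according to whether the chosen edge is reused) to a sum of a lower power $I^{t-1}$ and a path-power ideal $I(P_m)^{t}+I(H)$, so that Lemma \ref{lem_lowerbound_path} together with induction on $t$ gives $\depth(S/(I^t:e))\ge \varphi(n,t)$. Combining the two bounds closes this range.

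The range $t\ge \lceil (n+1)/2\rceil$ is where the answer depends on the parity of $n$, and where the lower bound above must break down, since $\varphi(n,t)$ there can exceed the claimed value and the $\min$ in the short exact sequence is governed by the colon term collapsing to the stable value. For $n$ odd I would prove $\depth(S/I^t)=0$ by showing $\m\in\operatorname{Ass}(S/I^t)$ once $t\ge (n+1)/2=k+1$, using the classification of associated primes of powers of edge ideals of odd cycles, according to which the maximal ideal enters the associated primes exactly from this exponent on. For $n$ even, $C_n$ is bipartite, so $\m$ is never associated and $\depth(S/I^t)\ge 1$ for all $t$; combining this with $\lim_{s}\depth(S/I^s)=1$ from Eq. \eqref{eq_limit_depth} and stabilization of the depth of ordinary powers from \cite{T}, together with a colon computation exhibiting an $f$ for which $\depth(S/(I^t:f))=1$, pins the value to $1$ for $t\ge n/2+1$.

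The main obstacle, in both the lower bound of the intermediate range and the parity split, is the explicit description of the colon ideal $I^t:e$: controlling precisely how colonizing by a single edge converts the cycle into a path, and tracking the exact exponent at which the path-lower-bound Lemma \ref{lem_lowerbound_path} ceases to govern the depth and the stable regime (with $\m$ becoming associated in the odd case) takes over. I expect verifying this transition occurs exactly at $\lceil (n+1)/2\rceil$, and matching the sharp boundary values, to be the technical core of the argument, while the upper bound and the quotient half of the lower bound follow routinely from the recalled lemmas.
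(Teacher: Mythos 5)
A preliminary but important point: this paper does not actually prove Theorem \ref{depth_power_cycle} --- it is recalled verbatim from \cite{MTV} (Theorem 1.1 there), together with Lemmas \ref{lem_lowerbound_path}--\ref{lem_upperbound_cycle}, so there is no in-paper proof to compare your attempt against. Judged on its own merits, your outline is right about the easy halves but has a genuine gap exactly where you predict the ``technical core'' lies, and the structural claim you make there is false.

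The upper bound via Lemma \ref{lem_upperbound} with $f = e_2\cdots e_t$ and Lemma \ref{lem_upperbound_cycle} is fine ($e_2\cdots e_t$ has degree $2(t-1)<2t$, hence lies outside $I^t$, and $t\le n-2$ holds throughout the range), as is the quotient half of the lower bound, $\depth S/(I^t+(e))\ge\varphi(n,t)$ from Lemma \ref{lem_lowerbound_cycle_aux}. The gap is the colon term. Your claim that $I^t:e$ ``relates to a sum of a lower power $I^{t-1}$ and a path-power ideal $I(P_m)^t+I(H)$'' is not correct: by Banerjee's even-connection description, taking the colon of a power of $I(C_n)$ by an edge introduces \emph{chords}. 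Already for $n=5$, $t=2$, $e=x_1x_2$ one computes $I(C_5)^2:e = I(C_5)+(x_3x_5)$, the edge ideal of the $5$-cycle with a chord --- a graph that is neither a subgraph of $C_5$ nor of a path, so neither Lemma \ref{lem_lowerbound_path} nor Lemma \ref{lem_lowerbound_cycle_aux} applies to it, and induction on $t$ does not reach it either. Controlling these chorded colon ideals is precisely the content of the missing work in \cite{MTV}, where the argument iterates colons by products of consecutive edges and decomposes the resulting ideals as intersections, in the same style as this paper's proof of Theorem \ref{depth_symbolic_power_cycle}; your sketch supplies no substitute. A second, smaller gap: in the even stable range you need an explicit monomial $f$ witnessing $\depth S/(I^{n/2+1}:f)\le 1$, and Lemma \ref{lem_upperbound_cycle} does not provide one, since $\varphi(n,n/2+1)=\lceil n/6\rceil>1$ for $n\ge 8$; that witness must be constructed separately. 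The odd stable range (showing $\m\in\operatorname{Ass}(S/I^t)$ from $t=(n+1)/2$ on, witnessed by $x_1\cdots x_n$) is correct as sketched.
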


Now, assume that $n = 2k+1$ where $k \ge 2$ is a positive integer. For a positive integer $s \in \NN$, we write $s = a(k+1) + b$ for some $a, b \in \NN$ and $0 \le b \le k$. Let $f = x_1 \cdots x_n$. By \cite[Theorem 3.4]{GHOS}, we have 
\begin{equation}\label{eq_sym_cycles_expansion}
    I(C_n)^{(s)} = \sum_{j =0}^a I(C_n)^{s - j (k+1)} f^j.
\end{equation}

We now establish some preparation results.

\begin{lem} \label{lem_upperbound_depth_symbolic_power}
    Assume that $I = I(C_n)$, $e_i = x_ix_{i+1}$ for all $i = 1, \ldots,n-1$. Then for all $s \le n - 2$, we have 
    $$\depth S/I^{(s)} \le \depth S/(I^{(s)} : e_2 \cdots e_{s-1}) \le \varphi(n,s).$$
\end{lem}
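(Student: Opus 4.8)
The plan is to establish the two inequalities separately. The first inequality, $\depth S/I^{(s)} \le \depth S/(I^{(s)} : e_2 \cdots e_{s-1})$, is a direct application of Lemma~\ref{lem_upperbound}, provided we check that $e_2 \cdots e_{s-1} = x_2 x_3^2 x_4^2 \cdots x_{s-1}^2 x_s \notin I^{(s)}$. For this I would use the expansion in Eq.~\eqref{eq_sym_cycles_expansion}: writing $s = a(k+1) + b$, a monomial lies in $I^{(s)}$ only if it is divisible by some $f^j \cdot (\text{product of } s - j(k+1) \text{ edges})$ for $0 \le j \le a$. Since the support of $e_2 \cdots e_{s-1}$ is $\{x_2, \ldots, x_s\}$, which omits $x_1$ (and $s \le n-2$ means it also omits $x_n, x_{n-1}$), it is not divisible by $f$, so only the $j = 0$ term is relevant; but $e_2 \cdots e_{s-1}$ is a product of only $s-2$ edges and has degree $2(s-2) < 2s$, so it cannot lie in $I^s$ either. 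Hence $e_2 \cdots e_{s-1} \notin I^{(s)}$ and Lemma~\ref{lem_upperbound} applies.

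For the second inequality, the natural strategy is to compute, or at least bound from below, the colon ideal $I^{(s)} : e_2 \cdots e_{s-1}$ and relate it to the power $I(C_n)^s : (e_2 \cdots e_s)$ appearing in Lemma~\ref{lem_upperbound_cycle}. I would again invoke Eq.~\eqref{eq_sym_cycles_expansion} to colon term by term:
$$I^{(s)} : (e_2 \cdots e_{s-1}) = \sum_{j=0}^a \left( I(C_n)^{s - j(k+1)} f^j : e_2 \cdots e_{s-1} \right).$$
The key observation is that colonizing the ordinary power $I^{s}$ (the $j=0$ term) by $e_2 \cdots e_{s-1}$ should, after accounting for the missing edge, be closely comparable to $I^{s} : (e_2 \cdots e_s)$, whose depth is already bounded by $\varphi(n,s)$ via Lemma~\ref{lem_upperbound_cycle}. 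The remaining terms with $j \ge 1$ contribute ideals containing powers of $f$ colonized by a monomial not involving $x_1$, which tends to produce ideals with small depth; one shows these extra summands only decrease the depth. A clean way to package this may be to show directly that $I^{(s)} : (e_2 \cdots e_{s-1})$ contains a known ideal $(I(C_n)^{s} : (e_2 \cdots e_{s-1}))$ together with extra generators, and then apply Lemma~\ref{lem_upperbound} once more with the single edge $e_{s}$ to pass from $e_2 \cdots e_{s-1}$ to $e_2 \cdots e_s$, using that $\depth S/(I^{(s)} : e_2 \cdots e_{s-1}) \le \depth S/((I^{(s)} : e_2 \cdots e_{s-1}) : e_s) = \depth S/(I^{(s)} : e_2 \cdots e_s)$ and then comparing this with the purely-powers colon.

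The main obstacle I anticipate is the bookkeeping in identifying $I^{(s)} : (e_2 \cdots e_s)$ (or $I^{(s)}: (e_2 \cdots e_{s-1})$) precisely enough to invoke Lemma~\ref{lem_upperbound_cycle}, since the symbolic power introduces the extra summands $I(C_n)^{s-j(k+1)} f^j$ that are absent in the ordinary-power setting of \cite{MTV}. I expect that for $s \le n-2$ the hypothesis forces $a$ to be small enough (in fact one should check whether $a \ge 1$ is even possible in this range, since $s \le n-2 = 2k-1 < 2(k+1)$ forces $a \le 1$, and $a = 1$ only when $s \ge k+1$) that these extra terms are manageable: when $a = 0$ the symbolic power equals the ordinary power and the result is exactly Lemma~\ref{lem_upperbound_cycle}; when $a = 1$ one only has the single extra summand $I(C_n)^{s-k-1} f$, and colonizing by $e_2 \cdots e_{s-1}$ (whose support misses $x_1$) turns $f$ into a monomial still divisible by $x_1$, so the extra summand contributes a variable-like generator that can only lower the depth. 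Carefully verifying this case split, and checking the edge cases where $s - 2 < 2$ or where $b = 0$, is where the real work lies.
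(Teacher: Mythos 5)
Your overall strategy --- expand $I^{(s)}$ via Eq.~\eqref{eq_sym_cycles_expansion}, note that $s\le n-2=2k-1$ forces $a\le 1$ so the only possible extra summand is $fI^{s-k-1}$ (present exactly when $k+1\le s$), and reduce to Lemma~\ref{lem_upperbound_cycle} --- is the same as the paper's, and your verification that $e_2\cdots e_{s-1}\notin I^{(s)}$ (not divisible by $f$, and of degree $2(s-2)<2s$) is correct and even more explicit than the paper. The gap is in how you dispose of the extra summand when $a=1$. The assertion that ``these extra summands only decrease the depth'' is not a legitimate step: depth is not monotone under adding generators to a monomial ideal, so observing that $fI^{s-k-1}$ contributes a generator ``still divisible by $x_1$'' proves nothing on its own; you would be left with an ideal genuinely different from $I^s:(e_2\cdots e_s)$ whose depth you would still have to bound from above.

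What the paper actually shows is that the extra summand is absorbed entirely, so the two colon ideals coincide: $(fI^{s-k-1}):(e_2\cdots e_s)\subseteq (f):(e_2\cdots e_s)=\left(f/\gcd(f,e_2\cdots e_s)\right)=(x_1x_{s+2}\cdots x_n)$, and because $s\le n-2$ this monomial is divisible by the \emph{edge} $x_nx_1$, hence lies in $I$; since $e_2\cdots e_s$ is a product of $s-1$ edges we have $I\cdot e_2\cdots e_s\subseteq I^s$, i.e.\ $I\subseteq I^s:(e_2\cdots e_s)$, so $I^{(s)}:(e_2\cdots e_s)=I^s:(e_2\cdots e_s)$ and Lemma~\ref{lem_upperbound_cycle} applies verbatim. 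Divisibility by the single variable $x_1$, which is all you record, does not place the generator inside $I^s:(e_2\cdots e_s)$; the essential point is divisibility by a whole edge, and this is precisely where the hypothesis $s\le n-2$ is used (for $s=n-1$ the quotient $f/\gcd$ degenerates to $x_1\notin I$ and the argument breaks). Your device of coloning by one more edge $e_s$ to match the indexing of Lemma~\ref{lem_upperbound_cycle} (which bounds $I^t:(e_2\cdots e_t)$, a product of $t-1$ edges) is sound and indeed needed to reconcile the statement with the cited lemma, but it is not a substitute for this absorption argument.
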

\begin{proof} Let $f = x_1 \cdots x_n$. By Eq. \eqref{eq_sym_cycles_expansion}, we have that $I^{(s)} = I^s$ when $s \le k$. Now, assume that $k+1 \le s \le n-2 = 2k-1$. By Eq. \eqref{eq_sym_cycles_expansion}, we have that 
$$I^{(s)} = I^s + f I^{s - k -1}.$$
Since $f / \gcd(f, e_2 \cdots e_{t-1}) \in I \subseteq I^s : (e_2 \cdots e_{s-1})$, we deduce that 
$$I^{(s)} : (e_2 \cdots e_{t-1}) = I^s : (e_2 \cdots e_{s-1}).$$
The conclusion follows from Lemma \ref{lem_upperbound_cycle}.
\end{proof}

\begin{lem}\label{lem_colon_sym_powers} Let $f = x_1 \cdots x_n$. Then for all integer $s$ such that $k + 1 \le s \le n-2$, 
$$I^{(s)} : f = I^{s - k - 1}.$$    
\end{lem}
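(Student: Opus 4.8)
The plan is to establish the identity $I^{(s)} : f = I^{s-k-1}$ for $k+1 \le s \le n-2$ by combining the explicit expansion of symbolic powers in Eq.~\eqref{eq_sym_cycles_expansion} with a colon computation. Writing $s = a(k+1) + b$ with $0 \le b \le k$, the constraint $s \le n-2 = 2k-1$ forces $a = 1$ (since $a = 0$ gives $s \le k$, contradicting $s \ge k+1$, while $a \ge 2$ gives $s \ge 2k+2 > 2k-1$). Hence Eq.~\eqref{eq_sym_cycles_expansion} reads $I^{(s)} = I^s + f \cdot I^{s-k-1}$. Taking the colon with $f$ distributes over the sum, so $I^{(s)} : f = (I^s : f) + (f I^{s-k-1} : f)$.

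The second summand is the easy part: $f I^{s-k-1} : f = I^{s-k-1}$ since $f$ is a nonzerodivisor in the polynomial ring (or more elementarily, one checks containment both ways on monomials). So the whole task reduces to showing $I^s : f \subseteq I^{s-k-1}$, the reverse containment $I^{s-k-1} \subseteq I^s : f$ being equivalent to $f \in I^{k+1}$, which holds because $f = x_1 x_2 \cdots x_{2k+1}$ is, up to a unit, the product $e_2 e_4 \cdots e_{2k}$ times $e_1$ — more carefully, $f$ equals a product of $k+1$ edges of $C_n$, for instance $x_1 x_2 \cdot x_3 x_4 \cdots x_{2k-1} x_{2k} \cdot x_{2k+1} x_1 / x_1$; the cleanest way is to note $x_1 x_2 \cdots x_{2k+1}$ divides $(x_1x_2)(x_3x_4)\cdots(x_{2k-1}x_{2k})(x_{2k}x_{2k+1})$ only after care, so instead use that $f^2 = \prod_{i} (x_i x_{i+1})$ over all $n$ edges lies in $I^n$, hence $f \in I^{(k+1)} \cap (\text{squarefree part})$; but the containment $f \in I^{k+1}$ follows directly from Eq.~\eqref{eq_sym_cycles_expansion} with $s = k+1$, where the term $f I^0 = (f)$ appears, so $f \in I^{(k+1)}$, and since $n$ is odd $I^{(k+1)} = I^{k+1} + f I^{0}$ forces nothing — better: the term $j = 1$ in Eq.~\eqref{eq_sym_cycles_expansion} for $s=k+1$ is literally $f$, and for that equation to make sense $f$ need not lie in $I^{k+1}$. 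The correct route for $I^{s-k-1} \subseteq I^s : f$ is simply: $f$ is a product of $k+1$ edges of $C_n$ (e.g.\ $f = x_1 x_2 \cdots x_{2k+1}$ and one exhibits an edge cover of the vertex set by $k+1$ edges, which exists since $C_n$ has a maximal matching of size $k$ leaving one vertex, covered by one extra edge with multiplicity), hence $f \in I^{k+1}$, so $f \cdot I^{s-k-1} \subseteq I^{s}$.

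For the nontrivial inclusion $I^s : f \subseteq I^{s-k-1}$, I would argue monomial-wise using the combinatorial description of $I(C_n)^s$: a monomial $m$ lies in $I(C_n)^s$ iff it can be written as a product of $s$ edges, equivalently iff there is an edge-multiset of size $s$ whose union (with multiplicity) divides $m$. Suppose $m f \in I^s$, so $m f = e_{j_1} \cdots e_{j_s} \cdot m'$ for some edges $e_{j_\ell}$ and monomial $m'$. The point is that $f = x_1 \cdots x_n$ can absorb at most $k+1$ of these edges: any submultiset of edges of $C_n$ whose product divides $f$ (a squarefree monomial on all $n$ vertices) has size at most $\lfloor n/2 \rfloor + 1 = k+1$ — indeed such a submultiset, viewed as a subgraph with multiplicities, has all vertices of degree $\le 1$ except possibly... no, each vertex has degree at most $1$ in the divisor sense since $f$ is squarefree, so it is a matching, of size $\le k$; allowing the leftover to also be forced one sees the bound $k+1$ is not quite reached, giving size $\le k$ actually, hence $m$ must be divisible by a product of at least $s - k$ edges. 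I expect the main obstacle to be pinning down this counting bound precisely — exactly how many of the $s$ edges $f$ can "pay for" — and handling the wrap-around edge $\{1,n\}$ correctly; once the bound "$f$ covers at most $k+1$ edges" (and hence $m \in I^{s-(k+1)}$) is nailed, the lemma follows. An alternative, cleaner approach avoiding this case analysis is to invoke Eq.~\eqref{eq_sym_cycles_expansion} once more at level $s - (k+1)$ is not available, so instead one may induct: I would try to reduce to the known colon formula by writing $I^s : f = (I^s : x_1) : (x_2 \cdots x_n)$ and peeling off variables, using that $I(C_n) : x_1 = I(P_{n-2}) + (x_2, x_n)$ where $P_{n-2}$ is the path on $\{3, \ldots, n-1\}$, though tracking symbolic-vs-ordinary powers through this makes the direct monomial count the safer bet.
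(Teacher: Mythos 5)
Your route is genuinely different from the paper's: the paper colons the decomposition $I^{(s)}=\p_1^s\cap\cdots\cap\p_t^s$ prime by prime, using that $f=x_1\cdots x_n$ involves each generator of $\p_i$ exactly once, whereas you colon the expansion $I^{(s)}=I^s+f\,I^{s-k-1}$ from Eq.~\eqref{eq_sym_cycles_expansion} summand by summand. Your reduction is correct as far as it goes: $a=1$ in the stated range, the colon of a monomial ideal by a monomial does distribute over the sum, and $(fI^{s-k-1}):f=I^{s-k-1}$ already gives the containment $I^{s-k-1}\subseteq I^{(s)}:f$ for free. In particular you do not need $I^{s-k-1}\subseteq I^s:f$, and the claim you eventually settle on to get it --- that $f$ is a product of $k+1$ edges, hence $f\in I^{k+1}$ --- is false: $\deg f=2k+1$ is odd while $I^{k+1}$ is generated in degree $2k+2$; an edge cover of $C_{2k+1}$ by $k+1$ edges has product $f\cdot x_v$ for the doubly covered vertex $v$, not $f$. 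Fortunately that containment is irrelevant to the lemma, so this error is a detour rather than a fatal one.

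The genuine gap is the one inclusion everything now rests on, $I^s:f\subseteq I^{s-k-1}$, which you assert but do not prove, and whose sketched justification is the wrong count. If $mf$ is divisible by a product $e_{j_1}\cdots e_{j_s}$ of $s$ edges, there is no reason this multiset splits into a part whose product divides $f$ and a part whose product divides $m$: a single edge can draw one variable from $m$ and one from $f$. So ``edges dividing $f$ form a matching of size $\le k$'' is not the relevant bound, and your intermediate conclusion that $m\in I^{s-k}$ is in fact false: for $n=5$, $s=3$ one has $x_1f=(x_1x_2)(x_3x_4)(x_5x_1)\in I^3$ while $x_1\notin I=I^{s-k}$. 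The correct bookkeeping is: if $d_v$ denotes the number of the $s$ chosen edges incident to $v$ (with multiplicity), then $\deg_{x_v}m\ge d_v-1$ for every $v$, so to place $m$ in $I^{s-k-1}$ one must discard a sub-multiset of exactly $k+1$ edges meeting \emph{every} vertex in the support of the edge multiset --- an edge cover, not a matching --- and one must prove such a cover of size at most $k+1$ always exists inside the chosen multiset (it does, because the distinct chosen edges form a disjoint union of paths or all of $C_{2k+1}$, and the odd cycle itself is the extremal case needing exactly $k+1$). Until that covering argument is actually carried out, the proof is incomplete; with it, your approach does yield the lemma.
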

\begin{proof} Let $\p_1, \ldots, \p_t$ be the associated primes of $I$. Then 
$$I^{(s)} = \p_1^s \cap \cdots \cap \p_t^s.$$
Since $\p_i$ is generated by $k+1$ variables for all $i = 1, \ldots, t$, we have $\p_i^s : f = \p_i^{s - k -1}$. Hence, $I^{(s)} : f = I^{(s-k-1)} = I^{s-k-1}$ since $s \le 2k-1$.
\end{proof}

We are now ready for the proof of Theorem \ref{depth_symbolic_power_cycle}. 

\begin{proof}[Proof of Theorem \ref{depth_symbolic_power_cycle}] By Eq. \eqref{eq_sym_cycles_expansion} and Theorem \ref{depth_power_cycle}, it remains to consider the cases where $k + 1 \le s \le 2k-1$. Let $f = x_1 \cdots x_n$. By Lemma \ref{lem_depth_colon}, Lemma \ref{lem_upperbound_depth_symbolic_power}, Lemma \ref{lem_colon_sym_powers}, and Theorem \ref{depth_power_cycle}, it suffices to prove that 
    \begin{equation}\label{eq_4_1}
        \depth (S/ (I^{(s)} + f)) \ge \varphi(n,s).
    \end{equation}
    Write $f = e_1 f_1$ where $f_1 = x_3 \cdots x_n$. We have $I^{(s)} + f = (I^{(s)},e_1) \cap (I^{(s)},f_1)$. For each $i = 1, \ldots, k-1$, we can write $f_i = e_{2i+1} f_{i+1}$. By repeated use of Lemma \ref{lem_short_exact_seq} and the fact that for any subgraph $H$ of $C_n$ we have 
    $$I^{(s)} + I(H) + f_i = (I^{(s)} + I(H) + (e_{2i+1})) \cap (I^{(s)} + I (H) + (f_{i+1})),$$ 
    it suffices to prove the following two claims

\vspace{1.5mm}
\noindent{\textbf{Claim 1.}} For any non-empty subgraph $H$ of $C_n$, we have $$\depth S/(I^{(s)} + I(H)) \ge \varphi(n,s).$$

\vspace{1.5mm}
\noindent{\textbf{Claim 2.}} For any (possibly empty) subgraph $H$ of $C_n$, we have $$\depth (S/(I^{(s)} + I(H) + (x_{n-2}x_{n-1}x_n))) \ge \varphi(n,s).$$

\vspace{1.5mm}
\noindent{\textbf{Proof of Claim 1.}} Since $k + 1 \le s \le 2k-1$, by Eq. \eqref{eq_sym_cycles_expansion}, we have that 
$$I^{(s)} = I^s + f I^{s- k -1}.$$
For any non-empty subgraph $H$ of $C_n$, we have $f \in I(H)$. Therefore, $I^{(s)} + I(H) = I^s + I(H)$. The conclusion follows from Lemma \ref{lem_lowerbound_cycle_aux}.

\vspace{1.5mm}
\noindent{\textbf{Proof of Claim 2.}} Let $J = I^{(s)} + I(H) + (x_{n-2}x_{n-1}x_n)$ and $e = x_{n-2}x_{n-1}$. Note that $J + (e)$ can be expressed as $I^{(s)} + I(H_1)$ for some subgraph $H_1$ of $C_n$ and $J:e = I(P_{n-1})^{s-1} + I(H') + ( x_n)$ where $H'$ is a subgraph of $P_{n-1}$. The claim follows from Lemma \ref{lem_depth_colon}, Claim 1, and Lemma \ref{lem_lowerbound_path}. The conclusion follows.
\end{proof}

\begin{rem}  
For cycles $C_{2k}$ of even length, by the result of Simis, Vasconcelos, and Villarreal \cite{SVV}, $I(C_{2k})^{(s)} = I(C_{2k})^s$ for all $s \ge 1$. The depth of powers of the edge ideal of $C_{2k}$ has been computed in \cite[Theorem 1.1]{MTV}.
\end{rem}

\section*{Acknowledgments}
Tran Nam Trung is partially supported by Project NCXS02.01/22-23 of the Vietnam Academy of Science and Technology. We are grateful to an anonymous referee for his/her thoughtful suggestions and comments to improve the readability of our manuscript.

\end{document}